\newtheorem{theorem}{Theorem}[section]
\newtheorem{proposition}[theorem]{Proposition}
\newtheorem{corollary}[theorem]{Corollary}
\newtheorem{definition}[theorem]{Definition}
\newtheorem{remark}[theorem]{Remark}
\newcommand{\cl}[1]{\mathcal{#1}}
\newcommand{\bb}[1]{\mathbb{#1}}
\begin{document}

\title{Distances for Operator-valued information channels}

\author[G. Baziotis]{Georgios Baziotis}
\address{
School of Mathematical Sciences, University of Delaware, 501 Ewing Hall,
Newark, DE 19716, USA}
\email{baziotis@udel.edu}

\date{\today}

\maketitle
\begin{abstract}
    We introduce three metrics on the set of quantum probability measures over a compact Hausdorff space and characterize them in terms of the completely bounded norm of the corresponding unital completely positive maps. We extend the existing topological structures between scalar-valued information channels to operator-valued ones and associate them with topologies on the set of unital completely positive maps between a commutative C*-algebra and the C*-algebra of bounded weakly measurable operator-valued functions over a compact Hausdorff space. Given a measure on the input alphabet space, we introduce the notion of an almost everywhere defined operator-valued information channel and provide a characterization result.
\end{abstract}
\section{Introduction}
Topological structures between classical information channels is a topic studied by several authors both in the setting of abstract and discrete input/output alphabet spaces \cite{jacobs,kieffer,neu-sh}. Their quantum counterparts initiated in \cite{btt} under the term operator-valued information channels are generalizations of finite families of POVM's which are of increasing importance in quantum information theory and specifically in non-local game theory as they are used to define the classes of quantum and quantum commuting no-signalling correlations. Motivated by the measure theoretic approach of non-local games studied in \cite{btt}, the investigation of the different modes of convergence between POVM's defined on spaces of arbitrary cardinality became of increasing importance in order to answer questions regarding the closure of various subclasses of measurable no-signalling correlations. However, we believe that this study is of independent interest since it generalizes the existing work in \cite{jacobs,neu-sh} to the non-commutative realm. 

\par We introduce an analogue of the total variation distance between two $\cl{B}(H)$-valued quantum probability measures and show that it coincides with the completely bounded norm of the difference of the corresponding unital completely positive maps. In the sequel, we use this metric characterization to provide a continuity theorem for Naimark's Dilation Theorem similar to the main result in \cite{werner}. Furthermore, we define a weak topology on the space of quantum probability measures and associate it with Arveson's BW topology \cite{arveson-acta,Pa}. 

\par  We study the modes of convergence between operator-valued information channels between two second countable compact Hausdorff spaces $X$ and $A$, viewed as the respective abstract input and output alphabets of the channel. We introduce a new metric called the $\tilde{\rho}$-metric on the space of all operator-valued information channels and show that the $\tilde{\rho}$-distance between two operator-valued information channels is equal to the operator norm of the difference of the corresponding unital completely positive maps from the space $C(A)$ of all complex-valued continuous functions on $A$ to the space $\cl{L}^\infty(X,\cl{B}(H))$ of all bounded weakly measurable functions from $X$ to $\cl{B}(H)$.
We further introduce a weaker mode of convergence and prove that it is sequentially compact.

\par Finally, given a probability measure $\mu$ on the abstract input alphabet space $X$, we introduce the notion of an operator-valued information channel mod $\mu$ as a Borel measurable family of quantum probability measures defined up to a $\mu$-null set and characterize them as unital completely positive maps from the space $C(A)$ to the space $L^\infty_\sigma(X,\cl{B}(H))$ of all essentially bounded weakly measurable functions from $X$ to $\cl{B}(H)$. 
\par 
For the remainder of this section we set the basic notation. Given a compact Hausdorff space $A$, we denote by $C(A)$ the unital C$^*$-algebra of all continuous functions defined on $A$. We write $\frak{A}_A$ for the Borel $\sigma$-algebra on $A$ and $M(A)$ for the space of all complex Borel measures on $A$. Note the isometric identification $C(A)^*=M(A)$. Let $H$ be a separable Hilbert space, $\cl{B}(H)$ will denote the space of all bounded operators on $H$ and $\cl{T}(H)$ the space of all trace class operators on $H$. For any $\xi,\eta\in H$ we will denote by $\xi\otimes \eta\in\cl{T}(H)$ the operator on $H$ given by $(\xi\otimes \eta)(\zeta)=\langle \zeta,\eta\rangle\xi$.
\par Let $\phi:\cl{A}\rightarrow\cl{B}$ be a linear map between two unital C$^*$-algebras $\cl{A}$ and $\cl{B}$, we say that $\phi$ is unital if $\phi(1_{\cl{A}})=1_{\cl{B}}$. For each $n\in\bb{N}$, set $\phi^{(n)}:M_n(\cl{A})\rightarrow M_n(\cl{B})$ to be the map defined as $\phi^{(n)}([a_{ij}]_{i,j})=[\phi(a_{i,j})]_{i,j}$ for $[a_{ij}]_{i,j}\in M_n(\cl{A})$. We say that $\phi$ is $n$-positive if $\phi^{(n)}(A)\in M_n(\cl{B})^+$ whenever $A\in M_n(\cl{A})^+$ and completely positive if $\phi$ is $n$-positive for every $n\in\bb{N}$. Note that in the case where either $\cl{A}$ or $\cl{B}$ is commutative, positivity implies complete positivity \cite[Theorem 3.11]{Pa} and \cite[Theorem 3.9]{Pa}.

\section{Quantum Probability Measures}\label{secqpm}
Let $H$ be a separable Hilbert space and $A$ be a second countable compact Hausdorff space. Recall that a $\cl{B}(H)$-valued \emph{quantum probability measure} \cite{ffp} on $A$ is a map $E:\mathfrak{A}_A\rightarrow \mathcal{B}(H)$, such that
		\begin{enumerate}
  
                \item[(i)] $E(\delta)\in\cl{B}(H)^+$, $ \delta\in\frak{A}_A$;
			\item [(ii)] $E(\emptyset)=0$ and $E(A)=I$;
			\item [(iii)] for any countable collection of pairwise disjoint sets $\{\delta_n\}_{n\in\mathbb{N}}\subseteq\mathfrak{A}_A$, we have $$E\left(\bigcup_{n=1}^\infty\delta_n\right)=\sum_{n=1}^\infty E(\delta_n),$$
			where the sum on the right hand side converges in the strong operator topology. 
			\end{enumerate}
Moreover, if $E(\delta\cap\theta)=E(\delta)E(\theta)$ whenever $\delta,\theta\in\mathfrak{A}_A$, we will call $E$ a spectral measure. We write $Q(A;H)$ for the set of all quantum probability measures on $A$ with values in $\cl{B}(H).$ By \cite[Proposition 4.5]{Pa} (see also \cite{btt}) there is a one to one correspondence between 
 the elements of $Q(A;H)$ and the unital completely positive maps from $C(A)$ to $\cl{B}(H)$, we will denote by $\phi_E$ the unique map that corresponds to $E\in Q(A;H)$ and satisfies the following formula.
 $$\langle \phi_E(f)\xi,\eta\rangle=\int_AfdE_{\xi,\eta}, \hspace{0.2cm}f\in C(A), \hspace{0.2cm}\xi,\eta\in H,$$
 where $E_{\xi,\eta}$ denotes the scalar Borel measure $E_{\xi,\eta}(\alpha)=\langle E(\alpha)\xi,\eta\rangle$, $\alpha\in\frak{A}_A.$ 
 In the sequel we will systematically use this identification to obtain metric characterizations between the newly introduced $\rho$ and $\delta$ distances and the completely bounded norm. 
\subsection{Total Variation Analogues}
Recall that for a measure $\mu\in M(A)$ the total variation of $\mu$ is defined as 
$$\|\mu\|_{tv}=\sup\left\{\sum_{i=1}^n\lvert\mu(\alpha_i)\rvert:\{\alpha_i\}_{i=1}^n\subseteq\frak{A}_A, \text{ partition of A} \right\}. $$
\begin{definition}
	Let $E\in Q(A;H)$. The total variation of $E$ is the quantity 
	
	$$\|E\|_{TV}=\operatorname{sup}\{\|E_{\xi,\eta}\|_{tv}:\xi,\eta\in H, \|\xi\|=\|\eta\|=1\}.$$ 
For $E,F\in Q(A;H)$ we define the \emph{$\rho$-distance} between $E$ and $F$ by letting $$\rho(E,F)=\|E-F\|_{TV}.$$
\end{definition}
    Notice that $\|E\|_{TV}\leq1$, since for any $\xi,\eta\in H$, we have that $\|E_{\xi,\eta}\|_{tv}\leq\|\xi\|\|\eta\|$, which consequently implies that the $\rho$-distance is bounded.

    \begin{proposition}\label{q_met}
	The $\rho$-distance is a metric on $Q(A;H)$.
\end{proposition}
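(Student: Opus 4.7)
The plan is to verify the three axioms of a metric; symmetry, non-negativity, and boundedness are essentially built into the definition, so the substantive content lies in the triangle inequality and in the separation axiom $\rho(E,F)=0\Rightarrow E=F$.

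First I would reduce everything to the scalar case. For each pair of unit vectors $\xi,\eta\in H$ the assignment $\alpha\mapsto (E-F)_{\xi,\eta}(\alpha)=\langle(E(\alpha)-F(\alpha))\xi,\eta\rangle$ is a complex Borel measure on $A$, since both $E_{\xi,\eta}$ and $F_{\xi,\eta}$ are. Symmetry and non-negativity of $\rho$ are then immediate from the corresponding properties of $\|\cdot\|_{tv}$ on $M(A)$, and the upper bound $\rho(E,F)\le 2$ follows from the remark preceding the proposition.

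For the triangle inequality, I would fix $E,F,G\in Q(A;H)$ and unit vectors $\xi,\eta\in H$. Since $(E-G)_{\xi,\eta}=(E-F)_{\xi,\eta}+(F-G)_{\xi,\eta}$ as scalar Borel measures, the triangle inequality for $\|\cdot\|_{tv}$ yields
\[
\|(E-G)_{\xi,\eta}\|_{tv}\le \|(E-F)_{\xi,\eta}\|_{tv}+\|(F-G)_{\xi,\eta}\|_{tv}\le \rho(E,F)+\rho(F,G),
\]
after which taking the supremum over unit $\xi,\eta$ gives $\rho(E,G)\le \rho(E,F)+\rho(F,G)$.

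The one step that requires a little care is the separation axiom. Suppose $\rho(E,F)=0$; then $\|(E-F)_{\xi,\eta}\|_{tv}=0$ for all unit $\xi,\eta$, hence by homogeneity for all $\xi,\eta\in H$. In particular $(E-F)_{\xi,\eta}(\alpha)=0$ for every $\alpha\in\frak{A}_A$, i.e.\ $\langle E(\alpha)\xi,\eta\rangle=\langle F(\alpha)\xi,\eta\rangle$ for all $\xi,\eta\in H$. Since the sesquilinear form determines the operator, this forces $E(\alpha)=F(\alpha)$ for every Borel set $\alpha$, so $E=F$. The converse is clear. The main (minor) obstacle is just keeping track of this sesquilinear-form argument; the rest is a direct transfer of the classical total variation metric to the operator-valued setting via the scalar slices $E_{\xi,\eta}$.
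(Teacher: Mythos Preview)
Your proof is correct and follows essentially the same approach as the paper: both arguments reduce each metric axiom to the corresponding property of the scalar total variation norm on $M(A)$ via the slices $E_{\xi,\eta}$, then take the supremum over unit vectors. The only differences are cosmetic (your explicit mention of homogeneity in the separation step and of the bound $\rho\le 2$); the structure and ideas are identical.
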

\begin{proof}
	Let $E,F\in Q(A;H)$. Then $\rho(E,F)=0$ implies that  $\|E_{\xi,\eta}-F_{\xi,\eta}\|_{tv}=0$ for any pair of unit vectors $\xi,\eta\in H$. Since $\|\cdot\|_{tv}$ is a metric on the unit sphere of $M(A)$, one obtains that $\langle E(\alpha)\xi,\eta\rangle=\langle F(\alpha)\xi,\eta\rangle$ for every $\alpha\in\frak{A}_A$. Thus, $E=F$.
	\par The symmetry of $\rho$ is immediate from the symmetry of the total variation metric $\|\cdot\|_{tv}$ on $M(A)$.
	
	\par To show the triangle inequality, let $E,F,R\in Q(A;H)$; one obtains
	
	\begin{align*}
		\rho(E,F) & =\sup_{\|\xi\|=\|\eta\|=1}\|E_{\xi,\eta}-R_{\xi,\eta}+R_{\xi,\eta}-F_{\xi,\eta}\|_{tv}\\
		& \leq\sup_{\|\xi\|=\|\eta\|=1}\|E_{\xi,\eta}-R_{\xi,\eta}\|_{tv}+\sup_{\|\xi'\|=\|\eta'\|=1}\|R_{\xi',\eta'}-F_{\xi',\eta'}\|_{tv}\\
		& = \rho(E,R)+\rho(R,F).
	\end{align*}
\end{proof}
 Utilizing the characterization of quantum probability measures as unital completely positive maps, we show that one can evaluate the $\rho$-distance between two quantum probability measures by calculating the norm of the difference between their corresponding unital completely positive maps. 
\begin{proposition}\label{tv_op}
	If $E,F\in Q(A;H)$, then $\rho(E,F)=\|\phi_E-\phi_F\|$.
\end{proposition}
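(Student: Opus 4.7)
The plan is to express both sides using the duality between $C(A)$ and $M(A)$ and the one-to-one correspondence $E \leftrightarrow \phi_E$, and then show each direction by taking suprema in the correct order.

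First, I would recall the standard Riesz-type characterization of the total variation norm: for any complex Borel measure $\mu\in M(A)$,
\[
\|\mu\|_{tv}=\sup\Bigl\{\Bigl|\textstyle\int_A f\, d\mu\Bigr|:f\in C(A),\ \|f\|_\infty\le 1\Bigr\},
\]
which follows from the isometric identification $C(A)^*=M(A)$ mentioned in the preliminaries. Applying this to the complex measure $E_{\xi,\eta}-F_{\xi,\eta}$ and using the defining formula for $\phi_E$, $\phi_F$, I would rewrite
\[
\|E_{\xi,\eta}-F_{\xi,\eta}\|_{tv}=\sup_{\|f\|_\infty\le 1}\bigl|\langle(\phi_E-\phi_F)(f)\xi,\eta\rangle\bigr|.
\]

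For the inequality $\rho(E,F)\le\|\phi_E-\phi_F\|$, I would fix $f\in C(A)$ with $\|f\|_\infty\le 1$ and unit vectors $\xi,\eta\in H$ and use Cauchy-Schwarz (or the operator norm identity $\|T\|=\sup_{\|\xi\|=\|\eta\|=1}|\langle T\xi,\eta\rangle|$) to bound $|\langle(\phi_E-\phi_F)(f)\xi,\eta\rangle|\le\|(\phi_E-\phi_F)(f)\|\le\|\phi_E-\phi_F\|$; then take the supremum over $f$ and over $\xi,\eta$.

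For the reverse inequality, I would start instead with the operator norm of $(\phi_E-\phi_F)(f)$, write it as the supremum of $|\langle(\phi_E-\phi_F)(f)\xi,\eta\rangle|$ over unit vectors, and apply the identity above in the form
\[
\bigl|\langle(\phi_E-\phi_F)(f)\xi,\eta\rangle\bigr|\le\|f\|_\infty\,\|E_{\xi,\eta}-F_{\xi,\eta}\|_{tv}\le\|f\|_\infty\,\rho(E,F),
\]
so that $\|(\phi_E-\phi_F)(f)\|\le\rho(E,F)\|f\|_\infty$ for every $f\in C(A)$, yielding $\|\phi_E-\phi_F\|\le\rho(E,F)$. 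Combining the two bounds gives equality.

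There is no real obstacle; the only subtlety worth stating carefully is the Riesz-type formula for $\|\mu\|_{tv}$ on $M(A)$, which requires $A$ to be compact Hausdorff (which is assumed) so that $C(A)^*=M(A)$ isometrically. Everything else amounts to interchanging two suprema, which is unproblematic since both are taken over arbitrary (not jointly constrained) parameters $f$ and $(\xi,\eta)$.
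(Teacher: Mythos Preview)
Your proposal is correct and follows essentially the same route as the paper: both arguments hinge on the isometric identification $C(A)^*=M(A)$ (Riesz) to equate $\|E_{\xi,\eta}-F_{\xi,\eta}\|_{tv}$ with $\sup_{\|f\|\le 1}\lvert\langle(\phi_E-\phi_F)(f)\xi,\eta\rangle\rvert$, and then take suprema over $f$ and over unit vectors. The only cosmetic difference is that the paper runs the inequality $\rho(E,F)\le\|\phi_E-\phi_F\|$ via an $\epsilon$-argument (choosing near-optimal $\xi,\eta$), whereas you invoke the interchange of suprema directly; the reverse inequality is handled identically in both.
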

\begin{proof}
	For $E\in Q(A;H)$ and $\xi,\eta\in H$, we denote by $\phi_{E,\xi,\eta}:C(A)\rightarrow \bb{C}$ the functional given by $\phi_{E,\xi,\eta}(f)=\langle \phi_E(f)\xi,\eta\rangle$, $f\in C(A)$. 
 \par Let $\epsilon>0$. Then, by the definition of $\rho$, there exist unit vectors $\xi,\eta\in H$ such that 
	
	$$\rho(E,F)\leq\|E_{\xi,\eta}-F_{\xi,\eta}\|_{tv}+\epsilon.$$
	Now define the map $\psi:C(A)\rightarrow\bb{C}$ by letting
	
	$$\psi(f)=\int_Afd(E_{\xi,\eta}-F_{\xi,\eta}),\hspace{0.3cm}f\in C(A).$$
By the Riesz Representation Theorem, we have that $\psi$ is bounded and $\|\psi\|=\|E_{\xi,\eta}-F_{\xi,\eta}\|_{tv}$. On the other hand, we have that 
	\begin{align*}
		\|\phi_{E,\xi,\eta}-\phi_{F,\xi,\eta}\| & = \sup_{\|f\|=1}\Big\lvert\phi_{E,\xi,\eta}(f)-\phi_{F,\xi,\eta}(f)\Big\rvert\\
		& = \sup_{\|f\|=1}\Big\lvert\int_AfdE_{\xi,\eta}-\int_AfdF_{\xi,\eta}\Big\rvert = \|\psi\|.
	\end{align*}
As a result, one obtains $$\rho(E,F)\leq\|\phi_{E,\xi,\eta}-\phi_{F,\xi,\eta}\|+\epsilon\leq\|\phi_E-\phi_F\|+\epsilon,$$ and since $\epsilon$ was arbitrary we have $\rho(E,F)\leq\|\phi_E-\phi_F\|$.
\par For the reverse inequality, we observe that
\begin{align*}
	\|\phi_E-\phi_F\|  = & \sup_{\|f\|=1}\|\phi_E(f)-\phi_F(f)\|_{\cl{B}(H)}\\
	= & \sup_{\|f\|=1}\sup_{\|\xi\|=\|\eta\|=1}\Big\lvert\langle\phi_E(f)\xi,\eta\rangle-\langle\phi_F(f)\xi,\eta\rangle\Big\rvert\\
	= & \sup_{\|f\|=1}\sup_{\|\xi\|=\|\eta\|=1}\Big\lvert\int_AfdE_{\xi,\eta}-\int_AfdF_{\xi,\eta}\Big\rvert\\
	\leq & \sup_{\|f\|=1}\sup_{\|\xi\|=\|\eta\|=1}\|f\|_\infty\|E_{\xi,\eta}-F_{\xi,\eta}\|_{tv}\\
	= & \rho(E,F).
\end{align*}
Therefore $\|\phi_E-\phi_F\|=\rho(E,F)$.
\end{proof}
Next we introduce an alternative metric on $Q(A;H)$, which we call the $\delta$-metric and relate it to the $\rho$-metric.
\begin{definition}
    Let $E,F\in Q(A;H)$. The \emph{$\delta$-distance} between $E$ and $F$ is given by setting 
    $$\delta(E,F)=\sup_{\alpha\in\frak{A}_A}\|E(\alpha)-F(\alpha)\|.$$
\end{definition}
\begin{theorem}\label{2d=r}
    Let $E,F\in Q(A;H)$. Then $2\delta(E,F)=\rho(E,F)$.
\end{theorem}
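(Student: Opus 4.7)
The plan is to prove the equality by establishing the two inequalities $\rho(E,F)\geq 2\delta(E,F)$ and $\rho(E,F)\leq 2\delta(E,F)$ separately.

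For the lower bound I would probe $\rho$ with diagonal pairings $\xi=\eta$ and two-element partitions. Fix $\alpha\in\mathfrak{A}_A$ and a unit vector $\xi\in H$. Since $E(A)=F(A)=I$, the signed measure $(E-F)_{\xi,\xi}$ has total mass zero, so $(E-F)_{\xi,\xi}(A\setminus\alpha)=-(E-F)_{\xi,\xi}(\alpha)$; evaluating $\|\cdot\|_{tv}$ on the partition $\{\alpha,A\setminus\alpha\}$ produces $2|\langle(E(\alpha)-F(\alpha))\xi,\xi\rangle|$. Taking the supremum over unit $\xi$ and using self-adjointness of $E(\alpha)-F(\alpha)$ gives $2\|E(\alpha)-F(\alpha)\|$, and a further supremum over $\alpha$ yields $2\delta(E,F)$.

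For the upper bound I would invoke Proposition~\ref{tv_op} to rewrite $\rho(E,F)=\|\phi_E-\phi_F\|$ and aim to show that $\|\phi_E(f)-\phi_F(f)\|\leq 2\delta(E,F)$ for every $f\in C(A)$ with $\|f\|_\infty\leq 1$. The crucial ingredient is that for each unit $\xi$ the measure $(E-F)_{\xi,\xi}$ is real and of mass zero, so its Hahn decomposition yields the sharp identity
\[
\|(E-F)_{\xi,\xi}\|_{tv} \;=\; 2\sup_{\alpha\in\mathfrak{A}_A}\bigl|(E-F)_{\xi,\xi}(\alpha)\bigr| \;\leq\; 2\delta(E,F).
\]
For real-valued $f$ the operator $\phi_E(f)-\phi_F(f)$ is self-adjoint, so its operator norm is $\sup_{\|\xi\|=1}\bigl|\int f\,d(E-F)_{\xi,\xi}\bigr|$, and the preceding display delivers the desired bound $2\delta(E,F)\|f\|_\infty$ on a dense self-adjoint cone. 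This already matches the lower bound obtained above when tested on diagonal elements, and accounts for the factor $2$ on the right.

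The step I expect to be the main obstacle is extending the real-valued estimate to complex-valued $f$ while preserving the sharp constant. The naive decomposition $f=\mathrm{Re}(f)+i\,\mathrm{Im}(f)$ followed by the triangle inequality costs a factor of two and produces only $\|\phi_E(f)-\phi_F(f)\|\leq 4\delta$; recovering the claimed constant $2$ seems to require exploiting the pointwise bound $(\mathrm{Re}\,f)^2(x)+(\mathrm{Im}\,f)^2(x)\leq 1$ in conjunction with the hermitian structure of $\phi_E-\phi_F$ on the commutative $C^*$-algebra $C(A)$, rather than the individually weaker $L^\infty$-bounds on the real and imaginary parts. An alternative would be a direct partition argument on $\|(E-F)_{\xi,\eta}\|_{tv}$, reducing an arbitrary partition to the two-element case by grouping terms according to the Hahn-type decomposition of the associated real measures.
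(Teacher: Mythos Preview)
Your lower bound $\rho\geq 2\delta$ is correct and complete; the paper obtains the same inequality implicitly, via the classical identity $\|\mu-\nu\|_{tv}=2\sup_{\alpha}|\mu(\alpha)-\nu(\alpha)|$ for scalar probability measures applied to the diagonal measures $E_{\xi,\xi},F_{\xi,\xi}$.

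The gap is exactly where you locate it: your upper-bound route through Proposition~\ref{tv_op} obliges you to control $\|(\phi_E-\phi_F)(f)\|$ for \emph{complex} $f$, and the real/imaginary split costs a factor of $2$. The paper avoids this detour entirely. It never invokes $\phi_E$ for the upper bound; instead it argues directly that the supremum defining $\rho$ may be restricted to diagonal pairs $\eta=\xi$, i.e.\ that
\[
\rho(E,F)=\sup_{\|\xi\|=1}\|E_{\xi,\xi}-F_{\xi,\xi}\|_{tv}.
\]
Once this reduction is in hand, $E_{\xi,\xi}$ and $F_{\xi,\xi}$ are honest scalar probability measures, and the classical two-set identity above yields $2\delta(E,F)$ with the sharp constant and no complex $f$ in sight. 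This is precisely the ``direct partition argument on $\|(E-F)_{\xi,\eta}\|_{tv}$'' that you float as an alternative in your final sentence---so the fix for your gap is already in your own proposal; you should abandon the $\phi_E-\phi_F$ route and execute that one instead. Concretely, the paper bounds each term of a partition sum by
\[
\bigl|\langle(E(\alpha_i)-F(\alpha_i))\xi,\eta\rangle\bigr|\leq\|E(\alpha_i)-F(\alpha_i)\|=\sup_{\|\zeta\|=1}\bigl|\langle(E(\alpha_i)-F(\alpha_i))\zeta,\zeta\rangle\bigr|,
\]
using self-adjointness of $E(\alpha_i)-F(\alpha_i)$, and then passes to $\sup_{\|\zeta\|=1}\|E_{\zeta,\zeta}-F_{\zeta,\zeta}\|_{tv}$. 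One caution if you follow this line: that final passage trades a sum of suprema over $\zeta$ for a supremum of sums, which is the wrong direction of the obvious inequality and deserves its own justification; the Hahn-type grouping you mention is the natural tool for making it rigorous.
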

\begin{proof}
    First we will prove that for any $E,F\in Q(A;H)$ we can evaluate $\rho(E,F)$ by taking supremums with $\eta=\xi$, that is 
    $$\rho (E,F)=\sup_{\|\xi\|=1}\|E_{\xi,\xi}-F_{\xi,\xi}\|.$$
    \par If $\xi,\eta\in H$ with $\|\xi\|=\|\eta\|=1$, then for any partition $\{\alpha_i\}_{i=1}^n\subseteq\frak{A}_A$ of $A$ one obtains 
    \begin{align*}
        \sum_{i=1}^n\lvert E_{\xi,\eta}(\alpha_i)-F_{\xi,\eta}(\alpha_i)\rvert & = \sum_{i=1}^n\lvert \langle (E(\alpha_i)-F(\alpha_i))\xi,\eta\rangle\rvert\\
        & \leq \sum_{i=1}^n\|E(\alpha_i)-F(\alpha_i)\|_{\cl{B}(H)}\|\xi\|\|\eta\|\\
        & = \sum_{i=1}^n\sup_{\|\zeta\|=1}\lvert\langle(E(\alpha_i)-F(\alpha_i))\zeta,\zeta\rangle\rvert\\
        & \leq \sup_{\|\zeta\|=1}\|E_{\zeta,\zeta}-F_{\zeta,\zeta}\|_{tv},
    \end{align*}
    where the second equality holds since $E(\alpha_i)-F(\alpha_i)$ is selfadjoint for every $i\in\{1,\dots,n\}.$
\par Recall that the total variation between two scalar probability measures $\mu,\nu\in M(A)$ is given by $\|\mu-\nu\|_{tv}=\frac{1}{2}\sup_{\alpha\in\frak{A}_A}\lvert\mu(\alpha)-\nu(\alpha)\rvert$. Now since for any unit vector $\xi\in H$, we have that $E_{\xi,\xi}$ and $F_{\xi,\xi}$ are scalar probability measures on $A$ it follows that $2\delta(E,F)=\rho(E,F)$.
\end{proof}
\subsection{A Bures-type distance}
The \emph{Bures distance} for completely positive maps was first introduced by Kretschmann, Schlingemann and Werner in \cite{werner} to generalize the Bures metric for positive functionals \cite{br}. It evaluates the distance between two completely positive maps based on the smallest norm difference of the bounded operators involved on their dilations. 
\par Let $\cl{A}$ be a C$^*$-algebra, $K$ Hilbert space and $\pi$ a *-representation of $\cl{A}$ on $K$. Given a completely positive map $\phi:\cl{A}\rightarrow\cl{B}(H)$, we say that $\pi$ \emph{dilates} $\phi$ if there exists an operator $V\in\cl{B}(H,K)$ such that  
\begin{align}\label{dilate}
    \phi(a)=V^*\pi(a)V,\hspace{0.3cm}a\in\cl{A}.
\end{align}
For simplicity, we will write $\phi=V^*\pi V$ whenever formula (\ref{dilate}) holds.   
The collection of all operators in $\cl{B}(H,K)$ that dilate $\phi$ will be denoted by  
$$S(\phi,\pi)=\{V\in\cl{B}(H,K):\phi=V^*\pi V\}.$$

\begin{definition}\label{bur_def}
    Let $\cl{A}$ be a C$^*$-algebra, $H$ a Hilbert space, $\phi_i:\cl{A}\rightarrow\cl{B}(H)$, $i=1,2$, be completely positive maps and $\pi$ a representation of $\cl{A}$ on some Hilbert space $K$. The $\pi$-distance between $\phi_1$ and $\phi_2$ is defined as $$\beta_\pi(\phi_1,\phi_2)=\inf\{\|V_1-V_2\|:V_i\in S(\phi_i,\pi)\}.$$ 
    If $S(\phi_i,\pi)=\emptyset$ for some $i=1,2$, set $\beta_\pi(\phi_1,\phi_2)=2$. The Bures distance between $\phi_1$ and $\phi_2$ is given by setting 
    $$\beta(\phi_1,\phi_2)=\inf_\pi\beta_\pi(\phi_1,\phi_2).$$
    
\end{definition}
In \cite{werner}, a continuity result for Stinespring's Theorem is provided, by showing that the topologies arising from the completely bounded norm $\|\cdot\|_{cb}$ and the Bures distance are equivalent. The theorem is captured by the following inequality 

$$\frac{\|\phi_1-\phi_2\|_{cb}}{\sqrt{\|\phi_1\|_{cb}}+\sqrt{\|\phi_2\|_{cb}}}\leq\beta(\phi_1,\phi_2)\leq\sqrt{\|\phi_1-\phi_2\|_{cb}}.$$
We define a Bures-type distance $\tilde{\beta}$ on $Q(A;H)$ and prove that for any $E,F\in Q(A;H)$ one has $\tilde{\beta}(E,F)=\beta(\phi_E,\phi_F)$. For $E\in Q(A;H)$ and a spectral measure $F\in Q(A;K)$ we say that $F$ \emph{dilates} $E$ if there exists $V\in\cl{B}(H,K)$ such that $$E(\alpha)=V^*F(\alpha)V, \hspace{0.3cm} \alpha\in\frak{A}_A,$$
and write $E=V^*F V$. Similarly to definition \ref{bur_def}, we write 
$$\tilde{S}(E,F)=\{V\in\cl{B}(H,K):E=V^*FV\}.$$

\begin{definition}
    Let $A$ be a compact Hausdorff space, $H$ a Hilbert space, $E_i\in Q(A;H)$, $i=1,2$ and $F$ a $\cl{B}(K)$-valued spectral measure on $A$. The $F$-distance between $E_1$ and $E_2$ is given by 

    $$\tilde{\beta}_F(E_1,E_2)=\inf\{\|V_1-V_2\|:V_i\in\tilde{S}(E_i,F)\}.$$
    If $\tilde{S}(E_i,F)=\emptyset$, then set $\tilde{\beta}_F(E_1,E_2)=2$. The Bures distance on $Q(A;H)$ is defined as
    $$\tilde{\beta}(E_1,E_2)=\inf_F\tilde{\beta}_F(E_1,E_2),$$
    where the infimum is taken over all spectral measures. 
\end{definition}

\begin{theorem}\label{bur_bur}
    Let $E_1,E_2\in Q(A;H)$. Then $\tilde{\beta}(E_1,E_2)=\beta(\phi_{E_1},\phi_{E_2})$.
\end{theorem}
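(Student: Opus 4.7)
The plan is to reduce the identity to the well-known bijective correspondence between $\cl{B}(K)$-valued spectral measures on $A$ and $*$-representations of $C(A)$ on $K$, and to show that this correspondence preserves the dilation relation.

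First, I would recall that by the spectral theorem for commutative $C^*$-algebras, every $*$-representation $\pi:C(A)\to\cl{B}(K)$ arises from a unique $\cl{B}(K)$-valued spectral measure $F$ via $\pi_F(f)=\int_A f\,dF$, and vice versa; this gives a bijection between spectral measures $F$ on $A$ with values in $\cl{B}(K)$ and representations $\pi$ of $C(A)$ on $K$.

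Next I would prove the key compatibility: for fixed $E\in Q(A;H)$, a spectral measure $F\in Q(A;K)$ and an operator $V\in\cl{B}(H,K)$, one has $E=V^*FV$ if and only if $\phi_E=V^*\pi_F V$. For the forward direction, the identity $(V^*FV)_{\xi,\eta}(\alpha)=\langle F(\alpha)V\xi,V\eta\rangle=F_{V\xi,V\eta}(\alpha)$ gives, for every $f\in C(A)$ and $\xi,\eta\in H$,
\begin{align*}
\langle\phi_E(f)\xi,\eta\rangle=\int_A f\,dE_{\xi,\eta}=\int_A f\,dF_{V\xi,V\eta}=\langle\pi_F(f)V\xi,V\eta\rangle=\langle V^*\pi_F(f)V\xi,\eta\rangle,
\end{align*}
so $\phi_E=V^*\pi_F V$. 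For the converse, the uniqueness in Naimark's correspondence between elements of $Q(A;H)$ and UCP maps $C(A)\to\cl{B}(H)$ (as recalled in Section~\ref{secqpm}) lets one recover $E(\alpha)$ from the functional $f\mapsto\langle\phi_E(f)\xi,\eta\rangle$, and the same computation read backwards yields $E=V^*FV$.

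The identity above shows that $\tilde{S}(E,F)=S(\phi_E,\pi_F)$ for every spectral measure $F$, and in particular $\tilde S(E,F)=\emptyset$ exactly when $S(\phi_E,\pi_F)=\emptyset$. Consequently $\tilde\beta_F(E_1,E_2)=\beta_{\pi_F}(\phi_{E_1},\phi_{E_2})$ for every such $F$. Taking the infimum over all spectral measures on the left and using the bijection with $*$-representations of $C(A)$ on the right yields
\begin{align*}
\tilde\beta(E_1,E_2)=\inf_F\tilde\beta_F(E_1,E_2)=\inf_\pi\beta_\pi(\phi_{E_1},\phi_{E_2})=\beta(\phi_{E_1},\phi_{E_2}),
\end{align*}
which is the claimed equality. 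I do not expect any serious obstacle; the only point requiring care is ensuring that the spectral measure/representation correspondence is indeed an exhaustive bijection, so that the two infima genuinely range over the same set of dilation data.
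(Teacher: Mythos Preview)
Your proposal is correct and follows essentially the same approach as the paper: both arguments rest on the spectral-measure/representation correspondence $F\leftrightarrow\pi_F$ and on showing that $V\in\tilde S(E_i,F)$ if and only if $V\in S(\phi_{E_i},\pi_F)$. The paper establishes the two inclusions $\tilde S(E_i,F)\subseteq S(\phi_{E_i},\pi_F)$ and $S(\phi_{E_i},\pi)\subseteq\tilde S(E_i,F_\pi)$ separately, while you package them as a single equivalence $E=V^*FV\iff\phi_E=V^*\pi_FV$ yielding $\tilde S(E,F)=S(\phi_E,\pi_F)$ directly; this is a cosmetic difference, not a substantive one.
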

\begin{proof}
Let $E_1,E_2\in Q(A;H)$ and $F\in Q(A;H)$ be a spectral measure. Take $V_i\in\tilde{S}(E_i,F)$, $i=1,2$, then by definition we have that $E_i=V_i^*FV_i$. Following the approximation arguments in \cite{btt}, we can extend $\phi_{E_i}$ to include the characteristic functions of elements in $\frak{A}_A$ such that $E_i(\alpha)=\phi_{E_i}(\chi_\alpha)$, for $\alpha\in\frak{A}_A$. As a result, one has $\phi_{E_i}=V_i^*\pi_FV_i$, $i=1,2$ which implies $V_i\in S(\phi_{E_i},\pi_F)$. Therefore $\tilde{S}(E_i,F)\subseteq S(\phi_{E_i},\pi_F)$ for $i=1,2$ and hence $\tilde{\beta}_F(E_1,E_2)\geq\beta_{\pi_F}(\phi_{E_1},\phi_{E_2})$ which consequently implies $\tilde{\beta}(E_1,E_2)\geq\beta(\phi_{E_1},\phi_{E_2})$.
\par Conversely, let $\pi$ be a representation of $C(A)$ on some Hilbert space $K$. Let $V_i\in S(\phi_{E_i},\pi)$, using the same argument as in the previous paragraph we have that $V_i\in\tilde{S}(E_i,F_\pi)$ where $F_\pi$ is the unique spectral measure corresponding to $\pi$ and hence $\tilde{S}(E_i,F_\pi)\supseteq S(\phi_{E_i},\pi)$ for $i=1,2$. Finally $\tilde{\beta}_{F_\pi}(E_1,E_2)\geq\beta_{\pi}(\phi_{E_1},\phi_{E_2})$, and thus $\tilde{\beta}(E_1,E_2)\geq\beta(\phi_{E_1},\phi_{E_2})$.
\end{proof} 
In Proposition \ref{tv_op} we identified the $\rho$-metric between two quantum probability measures with the operator norm of the corresponding unital completely positive maps. As a corollary of Theorem \ref{bur_bur}, we can translate the main result in \cite{werner} in terms of the metrics defined on this paper. Corollary \ref{naim_cont} can be viewed as a continuity result for Naimark's Dilation Theorem. 

\begin{corollary}\label{naim_cont}
    Let $E_1,E_2\in Q(A;H)$ Then the following holds 
    $$\frac{\rho(E,F)}{\sqrt{\|E_1\|_{TV}}+\sqrt{\|E_2\|_{TV}}}\leq\tilde{\beta}(E_1,E_2)\leq\sqrt{\rho(E,F)}.$$
    \end{corollary}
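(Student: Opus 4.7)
The plan is to apply the Kretschmann--Schlingemann--Werner inequality from \cite{werner} to the UCP maps $\phi_{E_1}$ and $\phi_{E_2}$ and then translate every quantity back to $Q(A;H)$ via Proposition \ref{tv_op} and Theorem \ref{bur_bur}.

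First I would record two easy identifications. Because $\phi_{E_i}$ is unital and completely positive and its domain $C(A)$ is commutative, one has $\|\phi_{E_i}\|_{cb} = \|\phi_{E_i}(1)\| = 1$. A short computation parallel to the proof of Proposition \ref{tv_op} shows that $\|E_i\|_{TV} = \|\phi_{E_i}\|$, so both quantities equal $1$ for a UCP map. In particular $\sqrt{\|E_1\|_{TV}} + \sqrt{\|E_2\|_{TV}} = \sqrt{\|\phi_{E_1}\|_{cb}} + \sqrt{\|\phi_{E_2}\|_{cb}}$.

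Next, using Proposition \ref{tv_op} (which gives $\rho(E_1,E_2)=\|\phi_{E_1}-\phi_{E_2}\|$) and Theorem \ref{bur_bur} (which gives $\tilde{\beta}(E_1,E_2)=\beta(\phi_{E_1},\phi_{E_2})$), the Werner bound applied to $\phi_{E_1},\phi_{E_2}$ becomes
$$\frac{\|\phi_{E_1} - \phi_{E_2}\|_{cb}}{\sqrt{\|E_1\|_{TV}} + \sqrt{\|E_2\|_{TV}}} \leq \tilde{\beta}(E_1, E_2) \leq \sqrt{\|\phi_{E_1} - \phi_{E_2}\|_{cb}}.$$
So the corollary reduces to comparing $\|\phi_{E_1} - \phi_{E_2}\|_{cb}$ with the operator norm $\|\phi_{E_1} - \phi_{E_2}\|$. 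The left-hand inequality is then immediate from the trivial bound $\|\phi_{E_1} - \phi_{E_2}\| \leq \|\phi_{E_1} - \phi_{E_2}\|_{cb}$ combined with Proposition \ref{tv_op}.

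For the right-hand inequality I need the reverse comparison $\|\phi_{E_1} - \phi_{E_2}\|_{cb} \leq \|\phi_{E_1} - \phi_{E_2}\|$. I would establish this by observing that $\phi_{E_1} - \phi_{E_2}$ is hermitian, as a difference of two positive maps, and invoking the classical fact that a bounded hermitian linear map from a commutative C$^*$-algebra into $\cl{B}(H)$ has cb-norm equal to its operator norm. Combined with Proposition \ref{tv_op} this gives $\|\phi_{E_1} - \phi_{E_2}\|_{cb} = \rho(E_1,E_2)$, and substitution into the right-hand Werner bound completes the proof. The main obstacle is precisely this cb-norm equality; if a direct appeal to it is undesirable, an alternative is to use a Hahn--Jordan decomposition of the signed measures $(E_1-E_2)_{\xi,\eta}$ to write $\phi_{E_1}-\phi_{E_2}$ as a difference of positive (hence, by Paulsen \cite[Theorem 3.11]{Pa}, automatically completely positive) maps from $C(A)$ to $\cl{B}(H)$, and then to track cb-norms through that decomposition.
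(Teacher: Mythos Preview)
Your overall strategy is exactly the paper's: the corollary is obtained by feeding the Kretschmann--Schlingemann--Werner inequality through Proposition~\ref{tv_op} and Theorem~\ref{bur_bur}, and you have spelled this out more carefully than the paper does. You are also right that the only non-trivial point is the right-hand inequality, which needs $\|\phi_{E_1}-\phi_{E_2}\|_{cb}\leq\|\phi_{E_1}-\phi_{E_2}\|$.

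The gap is in the justification you give for that step. The ``classical fact'' you invoke --- that every bounded hermitian map from a commutative C$^*$-algebra into $\cl{B}(H)$ has $\|\phi\|_{cb}=\|\phi\|$ --- is false. Take $\phi:\ell^\infty_3\to M_2$ defined by $\phi(e_k)=\sigma_k$ (Pauli matrices); this map is hermitian, and a direct computation gives $\|\phi\|=\sqrt{6}$, attained at $c_k=e^{2\pi i(k-1)/3}$, whereas $\|\phi^{(2)}\|\geq\|\sigma_1\otimes\sigma_1+\sigma_2\otimes\sigma_2+\sigma_3\otimes\sigma_3\|=\|2\,\mathrm{SWAP}-I\|=3$. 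Commutativity of the domain forces positive maps to be completely positive (Paulsen, Theorem~3.11), but it does not control the cb-norm of an arbitrary hermitian map. Your alternative via Hahn--Jordan decompositions of the scalar measures $(E_1-E_2)_{\xi,\eta}$ hits the same wall: the Hahn set depends on $\xi$, so you do not get a single decomposition $\phi_{E_1}-\phi_{E_2}=\psi_+-\psi_-$ into positive maps with $\|\psi_+(1)+\psi_-(1)\|=\|\phi_{E_1}-\phi_{E_2}\|$, which is what tracking cb-norms through that decomposition would require. Any repair must use more than hermiticity --- for instance that $\phi_{E_1}-\phi_{E_2}$ is a difference of \emph{unital} CP maps and hence annihilates $1$ --- but neither you nor the paper supplies that argument; the paper simply records the corollary as an immediate translation.
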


\subsection{Weak Topology} For the remainder of this section we introduce a weak mode of convergence for elements of $Q(A;H)$ and associate it with Arveson's BW topology as it was introduced in \cite{arveson-acta}. Recall that the BW topology on bounded sets of $\cl{B}(C(A),\cl{B}(H))$ is given as the point weak* topology induced by the duality $\cl{T}(H)^*\cong \cl{B}(H)$. 

\begin{definition}
	Let $(E^i)_{i\in I}$ be a net in $Q(A;H)$. We will say that $(E^i)_{i\in I}$ converges to some $E\in Q(A;H)$ in the set weak$^*$(SW) topology if $E^i(\alpha)$ converges to $E(\alpha)$ in the w$^*$ topology for every $\alpha\in\frak{A}_A$. 
\end{definition}

\begin{theorem}\label{ps_bw}
	Let $(E^i)_{i\in I}$ be a net in $Q(A;H)$ and $E\in Q(A;H)$. Then $(E^i)_{i\in I}$ converges to $E$ in the SW topology if and only if $(\phi_{E^i})_{i\in I}$ converges to $\phi_E$ in the BW topology.
\end{theorem}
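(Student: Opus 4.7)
The plan is to convert both modes of convergence into statements about the scalar measures $E^i_{\xi,\eta}, E_{\xi,\eta} \in M(A)$, by pairing the relevant differences against rank-one trace-class operators. For $T = \xi \otimes \eta$ we have $\tr(T E^i(\alpha)) = E^i_{\xi,\eta}(\alpha)$ and $\tr(T \phi_{E^i}(f)) = \int_A f\, dE^i_{\xi,\eta}$; hence SW convergence evaluated on such $T$ becomes setwise convergence $E^i_{\xi,\eta}(\alpha) \to E_{\xi,\eta}(\alpha)$ on every $\alpha \in \mathfrak{A}_A$, while BW convergence on such $T$ becomes $\int f\, dE^i_{\xi,\eta} \to \int f\, dE_{\xi,\eta}$ for every $f \in C(A)$. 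The passage from rank-one $T$ to general $T \in \cl{T}(H)$ in either direction is by trace-norm approximation of $T$ by finite-rank operators, combined with the uniform bounds $\|E^i(\alpha) - E(\alpha)\| \leq 2$ and $\|\phi_{E^i}(f) - \phi_E(f)\| \leq 2\|f\|_\infty$ coming from the fact that $\phi_{E^i},\phi_E$ are UCP.

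For the forward direction, SW convergence supplies setwise convergence of $E^i_{\xi,\eta}$ to $E_{\xi,\eta}$ with total variations uniformly bounded by $\|\xi\|\|\eta\|$. Since every bounded Borel function is a uniform limit of simple functions, for $f \in C(A)$ and $\varepsilon > 0$ I would approximate $f$ uniformly by a simple function $s$, invoke setwise convergence on the finitely many level sets of $s$, and control the remainder using the uniform total variation bound; this yields $\int f\, dE^i_{\xi,\eta} \to \int f\, dE_{\xi,\eta}$, and hence BW convergence after the rank-one-to-trace-class extension described above.

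For the reverse direction, BW convergence supplies weak-star convergence $E^i_{\xi,\eta} \to E_{\xi,\eta}$ in $M(A) = C(A)^*$ for every pair $\xi,\eta \in H$. To recover setwise convergence, I would extend each $\phi_E$ to the space $B(A)$ of bounded Borel functions via the UCP extension recalled in the proof of Theorem~\ref{bur_bur}, which satisfies $\tilde\phi_E(\chi_\alpha) = E(\alpha)$ for $\alpha \in \mathfrak{A}_A$, and then transport the BW limit through this extension. The hard part will be precisely this transport step: weak-star convergence of scalar measures on $C(A)$ does not in general force pointwise convergence on every Borel set, so one must exploit the bidual embedding $C(A) \hookrightarrow C(A)^{**} \supseteq B(A)$ together with a regularity or tightness argument on the family of scalar measures $E^i_{\xi,\eta}$ to conclude that $\tilde\phi_{E^i}(\chi_\alpha) \to \tilde\phi_E(\chi_\alpha)$ in weak-star for every $\alpha$.
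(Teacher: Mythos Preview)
Your forward direction is correct and essentially coincides with the paper's argument: both approximate $f\in C(A)$ uniformly by a simple function, invoke setwise convergence on its finitely many level sets, and control the remainder via the uniform bound $\|E^i_{\xi,\eta}\|_{tv}\le\|\xi\|\|\eta\|$. The reduction from general $T\in\cl{T}(H)$ to rank one that you spell out is not made explicit in the paper but is routine.

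For the reverse direction you correctly isolate the obstruction and leave it open; the paper, by contrast, claims to close it but does not. The paper approximates $\chi_\alpha$ pointwise by continuous $f_n$ (Urysohn, for $\alpha$ a closed ball) and writes
\[
\bigl|E^i_{\xi,\eta}(\alpha)-E_{\xi,\eta}(\alpha)\bigr|
\le \Bigl|E^i_{\xi,\eta}(\alpha)-\int_A f_n\,dE^i_{\xi,\eta}\Bigr|
+\Bigl|\int_A f_n\,d(E^i_{\xi,\eta}-E_{\xi,\eta})\Bigr|
+\Bigl|\int_A f_n\,dE_{\xi,\eta}-E_{\xi,\eta}(\alpha)\Bigr|,
\]
but the first term is controlled by dominated convergence only for each \emph{fixed} $i$, not uniformly over the net, so the three terms cannot be made small simultaneously. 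Your proposed ``tightness'' repair cannot succeed either: on a compact $A$ every family of probability measures is automatically tight, yet Portmanteau only yields $\mu_i(\alpha)\to\mu(\alpha)$ for $\mu$-continuity sets, not for all Borel $\alpha$. In fact the reverse implication is false as stated. Take $H=\bb{C}$, $A=[0,1]$, $E^n=\delta_{1/n}$, $E=\delta_0$: then $\phi_{E^n}(f)=f(1/n)\to f(0)=\phi_E(f)$ for every $f\in C(A)$, so BW convergence holds, while $E^n(\{0\})=0\not\to 1=E(\{0\})$, so SW convergence fails. The gap you sensed is real and cannot be closed without an additional hypothesis; it is not a missing step in your argument.
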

\begin{proof}
	First assume that $E^i$ converges to $E$ in the SW topology, that is $E^i(\alpha)$ converges to $E(\alpha)$ in the w$^*$ topology, for every $\alpha\in\frak{A}_A$. This means that for all unit vectors $\xi,\eta$ we have $$\langle E^i(\alpha)\xi,\eta\rangle\rightarrow \langle E(\alpha)\xi,\eta\rangle.$$
Fix $f\in C(A)$. Since $A$ is metrizable, for any $n\in \bb{N}$ there exists a step function $f_n=\displaystyle\sum_{j=1}^{k_n}\lambda_{j,n}\chi_{\delta_{j,n}}$ with $\lambda_{j,n}\in\bb{C}$ and $\delta_{j,n}\in\frak{A}_A$ such that $\|f-g_n\|_\infty<\frac{1}{n}.$ Therefore  
	\begin{align*}
		\Big\lvert\int_AfdE^i_{\xi,\eta}-\int_AfdE_{\xi,\eta}\Big\rvert = & 	\Big\lvert\int_AfdE^i_{\xi,\eta} -\int_Ag_ndE^i_{\xi,\eta}+\int_Ag_ndE^i_{\xi,\eta}\\ -&
		 \int_Ag_ndE_{\xi,\eta}+\int_Ag_ndE_{\xi,\eta}-\int_AfdE_{\xi,\eta}\Big\rvert\\
		 \leq & \int_A\lvert f-g_n\rvert d\lvert E^i_{\xi,\eta}\rvert+\Big\lvert\int_Ag_ndE^i_{\xi,\eta}-\int_Ag_ndE_{\xi,\eta}\Big\rvert\\
		 + & \int_A\lvert g_n-f\rvert d\lvert E_{\xi,\eta}\rvert\\
		 \leq &  2\|f-g_n\|_\infty\|\xi\|\|\eta\|\\
    + & \Big\lvert\sum_{j=1}^{k_n}\lambda_{j,n}\langle (E^i(\delta_{j,n})-E(\delta_{j,n}))\xi,\eta\rangle\Big\rvert\\
		 \leq & \frac{2}{n}+  \Big\lvert\sum_{j=1}^{k_n}\lambda_{j,n}\langle (E^i(\delta_{j,n})-E(\delta_{j,n}))\xi,\eta\rangle\Big\rvert
	\end{align*}
	Let $n\in\bb{N}$, since $\langle E^i(\delta)\xi,\eta\rangle\rightarrow\langle E(\delta)\xi,\eta\rangle$ for any $\delta\in\frak{A}_A$ it follows that $$\sum_{j=1}^{k_n}\lambda_{j,n}\langle E^i(\delta_{j,n})\xi,\eta\rangle\rightarrow \sum_{j=1}^{k_n}\lambda_{j,n}\langle E(\delta_{j,n})\xi,\eta\rangle$$ and as a result $\int_Af dE^i_{\xi,\eta}\rightarrow\int_Af dE_{\xi,\eta}$ which implies that $ \phi_{E^i}$ converges to $\phi_E$ in the BW topology. 
	\par Conversely, suppose $\phi_{E^i}\rightarrow\phi_E$ in the BW topology, then since $(\phi_{E^i})_{i\in I}$ is a normed bounded net we have that for every $f\in C(A)$ and for any $\xi,\eta\in H$ $\langle \phi_{E^i}(f)\xi,\eta\rangle\rightarrow\langle\phi_E(f)\xi,\eta\rangle$ implies that 
	
	$$\int_AfdE^i_{\xi,\eta}\rightarrow\int_Af dE_{\xi,\eta}.$$
	\par We will show that $E^i_{\xi,\eta}(\alpha)\rightarrow E_{\xi,\eta}(\alpha)$, for all $\alpha\in\frak{A}_A$. Since $\frak{A}_A$ is generated by the open sets and $E^i_{\xi,\eta}\in M(A)$ for every $i\in I$ and $\xi,\eta\in H$, it suffices to prove our claim for some $\alpha=B_r(a_0)$, where $B_r(a_0)$ is the closed ball of radius $r>0$ centered at $a_0\in A$. For each $n\in\bb{N}$ let $U_n=B_{r+\frac{1}{n}}(a_0)$. Then by Urysohn's lemma there exists continuous functions $f_n$ with values in $[0,1]$, converging pointwise to $\chi_\alpha$ such that $f_n(a)=1$, whenever $a\in \alpha$ and $f_n(a)=0$ whenever $a\not\in\overline{U_n}$. Therefore by Lebesgue Dominated Convergence Theorem, it follows that  for every $i\in I$ we have $\int_Af_ndE^i_{\xi,\eta}$ converges to $\int_A\chi_\alpha dE^i_{\xi,\eta}=\langle E(\alpha)\xi,\eta\rangle$ as $n\rightarrow\infty$ and $\int_Af_ndE_{\xi,\eta}\rightarrow\int_A\chi_\alpha dE_{\xi,\eta}=\langle E(\alpha)\xi,\eta\rangle.$ As a result one obtains 
	
\begin{align*}	
	 \Big\lvert E^i_{\xi,\eta}(\alpha)-E_{\xi,\eta}(\alpha)\Big\rvert & =	\Big\rvert E^i_{\xi,\eta}(\alpha)-\int_Af_ndE^i_{\xi,\eta}+\int_Af_ndE^i_{\xi,\eta}-\int_Af_ndE_{\xi,\eta}\\ & +\int_Af_ndE_{\xi,\eta}-E_{\xi,\eta}(\alpha)\Big\rvert\\
	 & \leq \Big\rvert E^i_{\xi,\eta}(\alpha)-\int_Af_ndE^i_{\xi,\eta}\Big\rvert+\Big\lvert\int_Af_ndE^i_{\xi,\eta}-\int_Af_ndE_{\xi,\eta}\Big\rvert\\ &+\Big\lvert\int_Af_ndE_{\xi,\eta}-E_{\xi,\eta}(\alpha)\Big\rvert,
\end{align*}
 which implies that $E^i_{\xi,\eta}(\alpha)\rightarrow E_{\xi,\eta}(\alpha)$ for every $ \alpha\in\frak{A}_A$. Thus $\langle E^i(\alpha)\xi,\eta\rangle\rightarrow\langle E(\alpha)\xi,\eta\rangle$ implies $\operatorname{Tr}(E^i(\alpha)(\xi\otimes\eta))\rightarrow\operatorname{Tr}(E(\alpha)(\xi\otimes\eta)),$ for every $\xi,\eta\in H$, which means that $E^i(\alpha)$ converges to $E(\alpha)$ in the w$^*$ topology for every $\alpha\in\frak{A}_A$.
\end{proof}
\begin{remark}
    The space $Q(A;H)$ equipped with the SW topology is homeomorphic to $\operatorname{UCP}(C(A),\cl{B}(H))$ equipped with the BW topoology and hence by \cite[Theorem 7.4.]{Pa} it is compact.
\end{remark}

\section{Operator-Valued Information Channels}
Operator-valued information channels are non-commutative analogues of classical information channels, firstly initiated to define the newly introduced measurable no-signalling quantum correlations and their subclasses in \cite{btt}. In this section we extend the modes of convergence for scalar valued channels studied in \cite{jacobs,kakihara} by utilizing the results introduced in section \ref{secqpm}. 

\begin{definition}
    Let $A$, $X$ be compact Hausdorff spaces and $H$ a separable Hilbert space. A $\cl{B}(H)$-valued information channel from $X$ to $A$ is a family of quantum probability measures $E=(E(\cdot|x))_{x\in X}$ such that the function $x\mapsto E(\alpha|x)$ is weakly measurable for any $\alpha\in\frak{A}_A$. 
\end{definition}
We will denote by $\frak{C}(A,X;H)$ the set of all $\cl{B}(H)$-valued information channels from $X$ to $A$.
A result in \cite{btt} provides a characterization of operator-valued information channels as measurable families of unital completely positive maps. Similarly with section \ref{secqpm}, we will use this result to provide metric and topological characterizations of the newly introduced topologies for operator-valued information channels. 
\par We denote by  $\cl{L}^\infty(X,\cl{B}(H))$ the space of all bounded w$^*$-measurable functions from $X$ to $\cl{B}(H)$, and note that it is a unital $C^*$-algebra when equipped with pointwise operations, the supremum norm and involution defined pointwise. Moreover, for every $x\in X$ we denote by $E_x$ the quantum probability measure $E(\cdot|x)$ and $\phi_{E_x}$ the corresponding unital completely positive map.
\begin{theorem}\label{th_Borel}
		Let $A,X$ be compact Hausdorff spaces, $A$ be also second countable and $E=(E_x)_{x \in X}$ be a $\cl{B}(H)$-valued information channel from $X$ to $A$. Then the map $\Phi_E : C(A)\rightarrow\cl{L}^\infty(X,\cl{B}(H))$, given by 
		$$\Phi_E(f)(x) = \phi_{E_x}(f), \hspace{0.3cm} f\in C(A), x\in X,$$
		is well-defined, unital and completely positive.

Conversely, if $\Phi : C(A)\rightarrow \cl{L}^\infty(X,\cl{B}(H))$  is a unital completely positive map
there exists $E\in\frak{C}(A,X;H)$ such that $\Phi = \Phi_E$.
	\end{theorem}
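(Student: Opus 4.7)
Fix $E=(E_x)_{x\in X}\in\mathfrak{C}(A,X;H)$. For each $x\in X$, Paulsen's Proposition 4.5 (cited in the excerpt) gives a UCP map $\phi_{E_x}:C(A)\to\cl{B}(H)$ with $\|\phi_{E_x}(f)\|\leq\|f\|_\infty$, so $\Phi_E(f)$ is bounded in $x$. The remaining well-definedness requirement is that $x\mapsto\langle\phi_{E_x}(f)\xi,\eta\rangle$ be Borel for every $\xi,\eta\in H$. For indicator functions $f=\chi_\alpha$ this is exactly the hypothesis that $E$ is a channel. For general $f\in C(A)$, since $A$ is second countable compact Hausdorff, hence metrizable, I would approximate $f$ uniformly by step functions $g_n=\sum_{j=1}^{k_n}\lambda_{j,n}\chi_{\delta_{j,n}}$ (exactly as in the proof of Theorem \ref{ps_bw}); then $x\mapsto\int_A g_n\,dE_{x,\xi,\eta}$ is a finite sum of measurable functions, and the bound $|\int f\,dE_{x,\xi,\eta}-\int g_n\,dE_{x,\xi,\eta}|\leq\|f-g_n\|_\infty$ (uniform in $x$) transfers measurability to the limit. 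Unitality is automatic from $\phi_{E_x}(1)=I$. For complete positivity, since $C(A)$ is commutative it suffices to check positivity by the remark on \cite[Theorem 3.9]{Pa} recorded in the introduction; if $f\geq 0$ in $C(A)$, each $\phi_{E_x}(f)$ is positive in $\cl{B}(H)$, and positivity in $\cl{L}^\infty(X,\cl{B}(H))$ is precisely pointwise positivity.

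\textbf{Converse direction.} Given UCP $\Phi:C(A)\to\cl{L}^\infty(X,\cl{B}(H))$, define $\phi_x(f)=\Phi(f)(x)$. The point evaluation $\mathrm{ev}_x:\cl{L}^\infty(X,\cl{B}(H))\to\cl{B}(H)$ is a unital $*$-homomorphism, hence UCP, so $\phi_x=\mathrm{ev}_x\circ\Phi$ is UCP. Paulsen 4.5 then produces a unique $E_x\in Q(A;H)$ with $\phi_{E_x}=\phi_x$, giving the candidate family $E=(E_x)_{x\in X}$. By uniqueness we automatically have $\Phi_E=\Phi$ once we know $E\in\mathfrak{C}(A,X;H)$, so the substantive task is to verify weak measurability of $x\mapsto E_x(\alpha)$ for every $\alpha\in\mathfrak{A}_A$.

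\textbf{Main obstacle: measurability in the converse.} Fix $\xi,\eta\in H$ and let
\[
\mathcal{M}=\{\alpha\in\mathfrak{A}_A:x\mapsto\langle E_x(\alpha)\xi,\eta\rangle\text{ is Borel}\}.
\]
The a priori input is measurability of $x\mapsto\langle\Phi(f)(x)\xi,\eta\rangle$ for \emph{continuous} $f$; this must be bootstrapped to \emph{every} Borel $\alpha$. The plan is to reuse the Urysohn construction from the proof of Theorem \ref{ps_bw}: for a closed ball $\alpha=B_r(a_0)$ and $U_n=B_{r+1/n}(a_0)$, produce continuous $f_n$ with $0\leq f_n\leq 1$, $f_n=1$ on $\alpha$, $f_n=0$ off $\overline{U_n}$, so that $f_n\to\chi_\alpha$ pointwise. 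Lebesgue's Dominated Convergence Theorem applied inside each scalar measure $E_{x,\xi,\eta}$ gives
\[
\langle E_x(\alpha)\xi,\eta\rangle=\lim_n\int_A f_n\,dE_{x,\xi,\eta}=\lim_n\langle\Phi(f_n)(x)\xi,\eta\rangle,
\]
a pointwise limit of measurable functions, hence measurable; so $\mathcal{M}$ contains all closed balls. Countable additivity of $E_{x,\xi,\eta}$ makes $\mathcal{M}$ a $\lambda$-system (stable under complements and countable disjoint unions), and closed balls form a $\pi$-system generating $\mathfrak{A}_A$ on the separable metrizable space $A$; Dynkin's $\pi$-$\lambda$ theorem then forces $\mathcal{M}=\mathfrak{A}_A$. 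Thus $E\in\mathfrak{C}(A,X;H)$, and by construction $\Phi_E=\Phi$.
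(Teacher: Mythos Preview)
The paper does not actually prove this theorem: it is stated immediately after the sentence ``A result in \cite{btt} provides a characterization of operator-valued information channels as measurable families of unital completely positive maps,'' and no proof environment follows. So there is nothing in the paper to compare against beyond the external citation; your proposal supplies a self-contained argument where the paper defers to \cite{btt}.

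Your forward direction is correct. Your converse is also correct in strategy, but contains one technical slip: closed balls do \emph{not} form a $\pi$-system in a general metric space (the intersection of two closed balls is closed but typically not a ball), so Dynkin's theorem cannot be invoked on that collection. The fix is immediate and costs nothing: run the same Urysohn/dominated-convergence argument for an arbitrary \emph{closed} set $\alpha$ (e.g.\ with $f_n(a)=\max(0,1-n\,d(a,\alpha))$), so that $\mathcal{M}$ contains all closed sets; closed sets do form a $\pi$-system generating $\mathfrak{A}_A$, and then the $\pi$--$\lambda$ theorem applies as you intended. With that one-word change (``closed balls'' $\to$ ``closed sets'') the argument goes through.
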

 \subsection{The $\tilde{\rho}$-distance}
\begin{theorem}
	The set $\frak{C}(A,X;H)$ equipped with the function $\tilde{\rho}:\frak{C}(A,X;H)\times \frak{C}(A,X;H)\rightarrow\bb{R}$, given by 
	$$\tilde{\rho}(E,F)=\sup_{x\in X}\rho(E_x,F_x),$$
	is a metric space.
\end{theorem}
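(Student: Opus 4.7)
The statement is a standard ``sup-metric'' construction, so the plan is to verify the four metric axioms for $\tilde{\rho}$ by reducing each one to the corresponding property of $\rho$ on $Q(A;H)$, which is already established in Proposition \ref{q_met}.

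First I would check that $\tilde{\rho}$ takes finite values, so that it is actually a real-valued function on $\frak{C}(A,X;H)\times\frak{C}(A,X;H)$. This is immediate: for every $x\in X$ and every pair $E,F\in\frak{C}(A,X;H)$, the triangle-inequality-like bound $\rho(E_x,F_x)\le \|E_x\|_{TV}+\|F_x\|_{TV}\le 2$ noted right after the definition of the $\rho$-distance gives $\tilde{\rho}(E,F)\le 2<\infty$. Non-negativity and symmetry are then inherited pointwise in $x$ from the corresponding properties of $\rho$.

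For the identity of indiscernibles, suppose $\tilde{\rho}(E,F)=0$. Then $\rho(E_x,F_x)=0$ for every $x\in X$, and Proposition \ref{q_met} gives $E_x=F_x$ as elements of $Q(A;H)$ for each $x\in X$. Since a $\cl{B}(H)$-valued information channel is precisely the family $(E_x)_{x\in X}$, this says $E=F$ in $\frak{C}(A,X;H)$. The converse is trivial.

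For the triangle inequality, let $E,F,R\in\frak{C}(A,X;H)$. For any fixed $x\in X$, the triangle inequality for $\rho$ yields
\begin{align*}
\rho(E_x,F_x) &\le \rho(E_x,R_x)+\rho(R_x,F_x)\\
&\le \sup_{y\in X}\rho(E_y,R_y)+\sup_{y\in X}\rho(R_y,F_y)\\
&= \tilde{\rho}(E,R)+\tilde{\rho}(R,F).
\end{align*}
Taking the supremum over $x\in X$ on the left gives $\tilde{\rho}(E,F)\le\tilde{\rho}(E,R)+\tilde{\rho}(R,F)$. I do not anticipate a main obstacle: the proof is entirely formal, and the only subtle point worth mentioning is that equality in $\frak{C}(A,X;H)$ is defined pointwise in $x$, so the implication $\tilde{\rho}(E,F)=0\Rightarrow E=F$ really does follow from the pointwise statement supplied by Proposition \ref{q_met} without any measurability or null-set considerations.
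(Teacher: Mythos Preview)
Your proof is correct and follows essentially the same route as the paper: both reduce each metric axiom for $\tilde{\rho}$ pointwise to the corresponding property of $\rho$ established in Proposition~\ref{q_met}, and both handle the triangle inequality by bounding $\rho(E_x,F_x)$ for fixed $x$ and then taking the supremum. Your version is slightly more careful in that you explicitly verify finiteness of $\tilde{\rho}$, which the paper leaves implicit.
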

\begin{proof}
	Let $E,F\in \frak{C}(A,X;H)$ and suppose that $\tilde{\rho}(E,F)=0$; this means that $\sup_{x\in X}\rho(E_x,F_x)=0$, for every $ x\in X$ which implies that $E_x=F_x$ for each $x\in X$ and hence $ E=F$. 
	\par Moreover, note that the symmetry of $\tilde{\rho}$ is straightforward from the symmetry of $\rho$.
	\par Finally since $\rho$ satisfies the triangle inequality, we have that for any $Q\in \frak{C}(A,X;H)$ the following holds:
	
	\begin{align*}
	\tilde\rho(E,F) & =\sup_{x\in X}\rho(E_x,F_x) \leq \sup_{x\in X}\rho(E_x,Q_x)+\rho(Q_x,F_x)\\
	& \leq\sup_{x\in X}\rho(E_x,Q_x)+\sup_{y\in X}\rho(Q_y,F_y) = \tilde\rho(E,Q)+\tilde\rho(Q,F).
	\end{align*}
\end{proof}
We can obtain a similar result to Proposition \ref{tv_op} for operator-valued information channels.
\begin{proposition}
	If $E,F\in \frak{C}(A,X;H)$, then $\tilde{\rho}(E,F)=\|\Phi_E-\Phi_F\|$.
\end{proposition}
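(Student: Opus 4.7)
The plan is to unpack both sides using their definitions and reduce the identity to a pointwise application of Proposition \ref{tv_op} together with the elementary fact that supremums commute. The only input beyond definitions is that the norm on $\cl{L}^\infty(X,\cl{B}(H))$ is the supremum over $x\in X$ of the $\cl{B}(H)$-norm, which is immediate from the definition given just before Theorem \ref{th_Borel}.

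First I would write out $\|\Phi_E-\Phi_F\|$ explicitly. By definition of the operator norm, this equals $\sup_{\|f\|=1}\|\Phi_E(f)-\Phi_F(f)\|_{\cl{L}^\infty(X,\cl{B}(H))}$, and by the definition of the supremum norm on $\cl{L}^\infty(X,\cl{B}(H))$ together with the formula $\Phi_E(f)(x)=\phi_{E_x}(f)$ from Theorem \ref{th_Borel}, this becomes
\[
\|\Phi_E-\Phi_F\|=\sup_{\|f\|=1}\sup_{x\in X}\bigl\|\phi_{E_x}(f)-\phi_{F_x}(f)\bigr\|_{\cl{B}(H)}.
\]

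Next I would swap the order of the two supremums (which is always valid for a single function of two variables) to rewrite the right-hand side as $\sup_{x\in X}\sup_{\|f\|=1}\|\phi_{E_x}(f)-\phi_{F_x}(f)\|_{\cl{B}(H)}=\sup_{x\in X}\|\phi_{E_x}-\phi_{F_x}\|$. At this point the proof reduces to invoking Proposition \ref{tv_op} at each fixed $x\in X$, which gives $\|\phi_{E_x}-\phi_{F_x}\|=\rho(E_x,F_x)$. Taking the supremum over $x\in X$ yields $\tilde\rho(E,F)$ by definition.

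I do not anticipate any real obstacle: the statement is essentially a compatibility check between the definitions of $\tilde\rho$, the supremum norm on $\cl{L}^\infty(X,\cl{B}(H))$, and the already-established scalar case in Proposition \ref{tv_op}. The one point that deserves to be written carefully is the verification that $\Phi_E(f)-\Phi_F(f)$ indeed lies in $\cl{L}^\infty(X,\cl{B}(H))$ so that its norm is genuinely the supremum over $x$; this follows directly from Theorem \ref{th_Borel} applied to $E$ and $F$ separately and linearity.
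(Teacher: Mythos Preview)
Your proof is correct and in fact cleaner than the paper's. The paper establishes the two inequalities separately: for $\|\Phi_E-\Phi_F\|\le\tilde\rho(E,F)$ it unpacks all the way down to the scalar level, bounding $\bigl|\int_A f\,d(E_{\xi,\eta}(\cdot|x)-F_{\xi,\eta}(\cdot|x))\bigr|$ by $\|f\|_\infty\|E_{\xi,\eta}(\cdot|x)-F_{\xi,\eta}(\cdot|x)\|_{tv}$, which essentially repeats the computation inside Proposition~\ref{tv_op}; for the reverse inequality it runs an $\epsilon/3$ argument, choosing first $x_0$, then $f$, then $\xi,\eta$. Your route avoids both steps by observing that the iterated supremum $\sup_{\|f\|=1}\sup_{x\in X}$ can be interchanged to $\sup_{x\in X}\sup_{\|f\|=1}$, after which the inner supremum is literally $\|\phi_{E_x}-\phi_{F_x}\|$ and Proposition~\ref{tv_op} applies pointwise. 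This is shorter and makes the dependence on the earlier result explicit; the paper's version has the minor advantage of being self-contained at the scalar level, but at the cost of redundancy.
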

\begin{proof}
	Let $E,F\in \frak{C}(A,X;H)$. Then the following holds 
	\begin{align*}
		\|\Phi_E-\Phi_F\| & =\sup_{\|f\|=1}\|\Phi_E(f)-\Phi_F(f)\|_{\cl{L}^\infty(X,\cl{B}(H))}\\
		& =\sup_{\|f\|=1}\sup_{x\in X}\|\phi_{E_x}(f)-\phi_{F_x}(f)\|_{\cl{B}(H)}\\
		& =\sup_{\|f\|=1}\sup_{x\in X}\sup_{\|\xi\|=\|\eta\|=1}\lvert\langle\phi_{E_x}(f)\xi,\eta\rangle-\langle\phi_{F_x}(f)\xi,\eta\rangle\rvert\\
		& = \sup_{\|f\|=1}\sup_{x\in X}\sup_{\|\xi\|=\|\eta\|=1}\Big\lvert\int_Afd(E_{\xi,\eta}(\cdot|x)-F_{\xi,\eta}(\cdot|x))\Big\rvert\\
		& \leq\sup_{\|f\|=1}\sup_{x\in X}\sup_{\|\xi\|=\|\eta\|=1}\|f\|_\infty\|E_{\xi,\eta}(\cdot|x)-F_{\xi,\eta}(\cdot|x)\|_{tv}\\
		& =\sup_{x\in X}\rho(E_x,F_x)=\tilde\rho(E,F).
	\end{align*}
For the reverse inequality, let $\epsilon>0$, then there exist $x_0\in X$ such that 
$$\tilde\rho(E,F)<\rho(E_{x_0},F_{x_0})+\frac{\epsilon}{3}=\|\phi_{E_{x_0}}-\phi_{F_{x_0}}\|+\frac{\epsilon}{3}.$$
Furthermore there exists $f\in C(A)$ with $\|f\|_\infty=1$ such that 
$$\|\phi_{E{x_0}}-\phi_{F_{x_0}}\|\leq\|\phi_{E{x_0}}(f)-\phi_{F_{x_0}}(f)\|+\frac{\epsilon}{3},$$
and unit vectors $\xi,\eta\in H$, satisfying 
$$\|\phi_{E{x_0}}(f)-\phi_{F_{x_0}}(f)\|\leq\lvert\langle\phi_{E_{x_0}}(f)\xi,\eta\rangle-\langle \phi_{F_{x_0}}(f)\xi,\eta\rangle\rvert+\frac{\epsilon}{3}.$$
Finally one obtains $$\tilde\rho(E,F)\leq\lvert\langle\phi_{E_{x_0}}(f)\xi,\eta\rangle-\langle \phi_{F_{x_0}}(f)\xi,\eta\rangle\rvert+\epsilon\leq\|\Phi_E-\Phi_F\|+\epsilon$$
and since $\epsilon$ was chosen arbitrarily $\tilde\rho(E,F)\leq\|\Phi_E-\Phi_F\|$.
\end{proof}

\subsection{Weak Topology}
Following the same route as in Section \ref{secqpm}, we define a weak topology on $\frak{C}(A,X;H)$. 

\begin{definition} Let $(E^i)_{i\in I}$ be a net in $\frak{C}(A,X;H)$ and $E\in\frak{C}(A,X;H)$. We say that $(E^i)_{i\in I}$ converges to $E$ in the point set weak* (pSW) topology if $(E^i(\cdot|x))_{i\in I}$ converges to $E(\cdot|x)$ in the SW topology for every $x\in X$.
\end{definition}
Utilizing the correspondence provided by Theorem \ref{th_Borel}, we will associate the pSW topology with a topology on the set $\operatorname{UCP}(C(A),\cl{L}^\infty(X,\cl{B}(H)))$ called the point BW (pBW) topology.  
\begin{definition}
	Let $(\Phi^i)_{i\in I}$ be a net in $\operatorname{UCP}(C(A),\cl{L}^\infty(X,\cl{B}(H)))$. We will say that $(\Phi^i)_{i\in I}$ converges to some $\Phi\in\operatorname{UCP}(C(A),\cl{L}^\infty(X,\cl{B}(H))$ in the point BW (pBW) topology if $(\Phi_x^i)_{i\in I}$ converges to $\Phi_x$ in the BW topology for every $x\in X$.
\end{definition}
\begin{theorem}\label{psw_pbw}
Let $(E^i)_{i\in I}$ be a net in $\frak{C}(A,X;H)$ and $E\in \frak{C}(A,X;H)$. Then $(\Phi_{E_i})_{i\in I}$ converges to $\Phi_E$ in the pBW topology if and only if $(E^i)_{i\in I}$ converges to $E$ in the pSW topology.
\end{theorem}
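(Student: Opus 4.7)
The plan is to reduce the equivalence to Theorem \ref{ps_bw} applied pointwise in $x\in X$. The key identification is that by Theorem \ref{th_Borel}, the defining formula $\Phi_E(f)(x)=\phi_{E_x}(f)$ yields $(\Phi_E)_x=\phi_{E_x}$ for every $x\in X$, and the same identity holds with $E^i$ in place of $E$. Thus the pointwise slice of $\Phi_E$ at $x$ is exactly the unital completely positive map corresponding to the quantum probability measure $E_x$.

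With this identification in hand, I would unfold the two definitions. The pBW convergence $\Phi_{E^i}\to\Phi_E$ says, by definition, that $(\Phi_{E^i})_x\to(\Phi_E)_x$ in the BW topology for every $x\in X$; by the identification above, this is the same as $\phi_{E^i_x}\to\phi_{E_x}$ in BW for each $x$. The pSW convergence $E^i\to E$ says that $E^i_x\to E_x$ in the SW topology for every $x\in X$. Both conditions are quantified over the same parameter $x$.

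For each fixed $x\in X$, Theorem \ref{ps_bw} applied to the quantum probability measures $E^i_x,E_x\in Q(A;H)$ gives precisely the equivalence $\phi_{E^i_x}\to\phi_{E_x}$ in BW if and only if $E^i_x\to E_x$ in SW. Quantifying over all $x\in X$ on both sides then yields the stated equivalence. There is no substantive obstacle here beyond carefully invoking Theorem \ref{th_Borel} to identify $(\Phi_E)_x$ with $\phi_{E_x}$; all of the analytic work (step function approximation, Urysohn's lemma, and dominated convergence) has already been carried out in the proof of Theorem \ref{ps_bw}, so the present statement is essentially a parametrized corollary.
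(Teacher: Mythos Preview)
Your proposal is correct and follows essentially the same approach as the paper: both directions are obtained by fixing $x\in X$, identifying $(\Phi_E)_x$ with $\phi_{E_x}$ via Theorem~\ref{th_Borel}, and invoking Theorem~\ref{ps_bw} to pass between SW and BW convergence pointwise.
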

\begin{proof}
	Let $\Phi_{E^i}\rightarrow\Phi_E$ in the pBW topology. Then $\Phi_{E^i_x}\rightarrow\Phi_{E_x}$ in the BW topology for every $ x\in X$. Thus by Theorem \ref{ps_bw}, one obtains $E^i_x\rightarrow E_x$ in the SW topology for every $ x\in X$, which implies that $E^i\rightarrow E$ in the pSW topology.
	
	\par Conversely, suppose $E^i\rightarrow E$ in the pSW topology, then $E^i_x\rightarrow E_x$ in the SW topology for every $ x\in X$. Again by Theorem  \ref{ps_bw} it follows that $\Phi_{E^i_x}\rightarrow\Phi_{E_x}$ in the BW topology for every $ x\in X$ which means that $\Phi_{E^i}\rightarrow\phi_E$ in the pBW topology. 
\end{proof} 

\begin{theorem}\label{ov_seq}
	The topological space $(\operatorname{UCP}(C(A),\cl{L}^\infty(X,\cl{B}(H))),pBW)$ is sequentially compact.
\end{theorem}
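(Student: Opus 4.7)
The plan is to leverage the equivalence between the pBW and pSW topologies established in Theorem \ref{psw_pbw}, reducing the assertion to sequential compactness of $(\frak{C}(A,X;H),\mathrm{pSW})$. The key auxiliary observation is that $(Q(A;H),\mathrm{SW})$ is not merely compact (by the remark following Theorem \ref{ps_bw}) but \emph{sequentially} compact: since $A$ is second countable compact Hausdorff, $C(A)$ is separable, and since $H$ is separable, so is $\cl{T}(H)$; thus the BW topology on the unit ball of $\cl{B}(C(A),\cl{B}(H))$ is metrizable, and a compact metrizable space is sequentially compact.

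Given a sequence $(\Phi^n)_n$ in $\operatorname{UCP}(C(A),\cl{L}^\infty(X,\cl{B}(H)))$, I would begin by invoking Theorem \ref{th_Borel} to identify it with a sequence $(E^n)_n$ in $\frak{C}(A,X;H)$, so that $\Phi^n_x=\phi_{E^n_x}\in\operatorname{UCP}(C(A),\cl{B}(H))$ for every $x\in X$. Since $X$ is second countable, fix a countable dense subset $\{x_m\}_{m\in\bb{N}}\subseteq X$. Applying sequential compactness of $Q(A;H)$ at $x_1,x_2,\dots$ in turn and passing to a diagonal subsequence $(E^{n_k})_k$ produces elements $E_{x_m}\in Q(A;H)$ with $E^{n_k}_{x_m}\to E_{x_m}$ in SW for every $m$.

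The hard part, and the step I expect to be the main obstacle, is to promote the pointwise SW-convergence from the countable dense set $\{x_m\}$ to every $x\in X$ and to verify that the resulting assignment $x\mapsto E_x$ defines a genuine channel (i.e., is weakly measurable in $x$), since the naive diagonal procedure breaks down when $X$ is uncountable. I would attempt to handle this by combining the Tychonoff compactness of $Q(A;H)^X$ with a further refinement of the extracted subsequence that exploits countable dense subsets $\{f_j\}\subseteq C(A)$ and $\{T_k\}\subseteq\cl{T}(H)$, reducing the problem to the coherent pointwise selection of countably many uniformly bounded Borel scalar-valued functions $x\mapsto\tr(T_k\Phi^n(f_j)(x))$ on $X$. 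Once a single subsequence convergent at every $x$ is secured, the weak measurability of the limit follows from the classical fact that a pointwise limit of a sequence of Borel scalar functions is Borel, so $E\in\frak{C}(A,X;H)$, and the corresponding $\Phi_E\in\operatorname{UCP}(C(A),\cl{L}^\infty(X,\cl{B}(H)))$ is the pBW-limit of $(\Phi^{n_k})_k$, as required.
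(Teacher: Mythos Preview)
The step you flag as the main obstacle is a genuine gap, and it cannot be closed along the route you sketch. Passing from SW-convergence on a countable dense set $\{x_m\}\subseteq X$ to convergence on all of $X$ would require some form of continuity of $x\mapsto E^n_x$, but channels are only weakly \emph{measurable} in $x$; density buys nothing here. Your fallback---reducing via countable dense sets in $C(A)$ and $\cl{T}(H)$ to the pointwise-selection problem for countably many uniformly bounded Borel scalar functions on $X$---amounts to asking that the product $[0,1]^X$ be sequentially compact, and this fails as soon as $X$ is uncountable. Concretely, take $X=[0,1]$, $A=\{0,1\}$, $H=\bb{C}$, and let $\Phi^n$ correspond to the Borel function $g^n(x)=$ the $n$-th binary digit of $x$; for any subsequence $(n_k)$ one can build $x\in[0,1]$ whose digits along $(n_k)$ alternate, so no subsequence converges at every point.

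The paper proceeds differently: it embeds $\Delta=\operatorname{UCP}(C(A),\cl{L}^\infty(X,\cl{B}(H)))$ homeomorphically into the Tychonoff product $\Gamma^X$ with $\Gamma=\operatorname{UCP}(C(A),\cl{B}(H))$, uses compactness of $\Gamma^X$, and then argues that the image of $\Delta$ is sequentially closed (pointwise limits of sequences of Borel functions are Borel). Note, however, that ``compact and sequentially closed'' does not imply sequentially compact---a sequence in a compact non-metrizable space need not have any convergent subsequence---so the paper's argument hides exactly the same obstruction you ran into. The binary-digit example above shows the stated conclusion is in fact false for uncountable $X$. Both your approach and the paper's go through cleanly when $X$ is countable (a direct diagonal argument over $X$ then suffices), or if one passes to equivalence classes modulo a measure $\mu$ as in the subsequent subsection, where the BW topology on $\cl{B}(C(A),L^\infty_\sigma(X,\mu,\cl{B}(H)))$ is the right replacement.
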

\begin{proof}
	For simplicity, we denote $\Delta=\operatorname{UCP}(C(A),\cl{L}^\infty(X,\cl{B}(H)))$ and $\Gamma=\operatorname{UCP}(C(A),\cl{B}(H))$. Consider the topological space $(\Gamma,BW)$ and construct the product space $\Gamma^X=\prod_{x\in X}\Gamma_x$, where $\Gamma_x=\Gamma$, for all $x\in X$ equipped with the product topology $\tau$. We define the map $\Lambda: (\Delta,pBW)\rightarrow(\Gamma^X,\tau)$ by 
	
	$$\Lambda(\Phi)=(\phi_x)_{x\in X},$$
	where $\phi_x=\Phi(\cdot)(x), x\in X$. Notice that $\Lambda$ is injective, since for any $\Phi,\Psi\in\Delta$ we have 
	$$\Lambda(\Phi)=\Lambda(\Psi)\iff(\phi_x)_{x\in X}=(\psi_x)_{x\in X}\iff\phi_x=\psi_x, \forall x\in X,$$
 which is equivalent with $\Phi=\Psi$.
	Let $(\Phi^i)_{i\in I}$ be a net in $\Delta$ and a $(\phi_x)_{x\in X}$ a family of unital completely positive maps from $C(A)$ to $\cl{B}(H)$ (not neccessarily measurable) and consider $\Phi(\cdot)(x)=\phi_x(\cdot)(f)$, $x\in X$. Suppose that $(\Phi^i)_{i\in I}$ converges to $\Phi$ in the pBW topology, that means $\phi^i_x\rightarrow\phi_x$ in the BW topology $\forall x\in X$, which is equivalent with $(\phi^i_x)_{x\in X}\rightarrow(\phi_x)_{x\in X}$ in the product topology. Therefore $(\Delta,pBW)$ is homeomorphic with $(\Lambda(\Delta),\tau)$. 
	\par It follows from Tychonoff's Theorem that $\Gamma^X$ is compact and hence to conclude the proof it suffices to show that $(\Delta,pBW)$ is sequentially closed. Let $(\Phi_n)_{n\in\bb{N}}$ be a sequence in $\Delta$, then for any $f\in C(A)$ and $\xi,\eta$ one obtains that $g_n(x)=\langle\Phi_n(f)(x)\xi,\eta\rangle$, $n\in\bb{N}$ is a sequence of measurable functions and therefore any pointwise limit of those is also measurable, which means that $(\Delta,pBW)$ is sequentially closed.
\end{proof}
\begin{corollary}
The topological space $(\frak{C}(A,X;H),pSW)$ is sequentially compact. 
\end{corollary}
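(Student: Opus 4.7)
The plan is to transport the sequential compactness result from Theorem \ref{ov_seq} across the bijective correspondence supplied by Theorem \ref{th_Borel}. Concretely, given a sequence $(E^n)_{n\in\bb{N}}$ in $\frak{C}(A,X;H)$, I would associate to it the sequence $(\Phi_{E^n})_{n\in\bb{N}}$ in $\operatorname{UCP}(C(A),\cl{L}^\infty(X,\cl{B}(H)))$. Since Theorem \ref{ov_seq} states that this latter space is sequentially compact in the pBW topology, there exists a subsequence $(\Phi_{E^{n_k}})_{k\in\bb{N}}$ and some unital completely positive map $\Psi \in \operatorname{UCP}(C(A),\cl{L}^\infty(X,\cl{B}(H)))$ such that $\Phi_{E^{n_k}} \to \Psi$ in the pBW topology.

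Next I would invoke the converse direction of Theorem \ref{th_Borel}, which guarantees that any such $\Psi$ arises as $\Psi = \Phi_E$ for a unique $E\in\frak{C}(A,X;H)$. Finally, Theorem \ref{psw_pbw} ensures that convergence in the pBW topology on the UCP side is equivalent to convergence in the pSW topology on the channel side, so the subsequential convergence $\Phi_{E^{n_k}} \to \Phi_E$ translates directly into $E^{n_k}\to E$ in the pSW topology. This produces the required convergent subsequence in $\frak{C}(A,X;H)$ and concludes the argument.

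There is essentially no obstacle here: the work has already been carried out in Theorems \ref{th_Borel}, \ref{psw_pbw}, and \ref{ov_seq}. The only bookkeeping item worth noting is that Theorem \ref{psw_pbw} is phrased for nets, but its statement specializes to sequences without modification, so the homeomorphism it establishes preserves sequential compactness. Thus the corollary is a routine transfer along a homeomorphism.
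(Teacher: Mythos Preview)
Your proposal is correct and follows essentially the same approach as the paper: the paper's proof simply observes that Theorem~\ref{psw_pbw} (together with the bijection from Theorem~\ref{th_Borel}) yields a homeomorphism between $(\frak{C}(A,X;H),pSW)$ and $(\operatorname{UCP}(C(A),\cl{L}^\infty(X,\cl{B}(H))),pBW)$, and then transfers the sequential compactness of Theorem~\ref{ov_seq} across it. You have spelled out explicitly the subsequence-extraction that this homeomorphism argument packages, but the content is identical.
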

\begin{proof}
	By Theorem \ref{psw_pbw} we have that $(\operatorname{UCP}(C(A),\cl{L}^\infty(X,\cl{B}(H))) pBW)$ is homeomorphic to $(\frak{C}(A,X;H),pSW)$ and since $\operatorname{UCP}(C(A),\cl{L}^\infty(X,\cl{B}(H)))$ equipped with the pBW is sequentially compact so is $(\frak{C}(A,X;H),pSW)$.
\end{proof}
\subsection{Operator-valued Information Channels mod $\mu$}

Throughout this section, we will fix a probability measure $\mu$ on $(X,\frak{A}_X)$ and study measurable families of unital completely positive maps defined $\mu$-almost everywhere. From now on $A$ is assumed to be second countable.

We define an equivalence relation on $\frak{C}(A,X;H)$ in the following way: We say that $E$, $F\in\frak{C}(A,X;H)$ are $\mu$-equivalent and write $E\sim_\mu F$ if for every $ \alpha\in \frak{A}_A$ we have $$E(\alpha|x)=F(\alpha|x)\hspace{0.4cm}\mu\text{-a.e.}.$$
Similarly we can define an equivalence relation on the set of all elements of $\operatorname{UCP}(C(A),\cl{L}^\infty(X,\cl{B}(H)))$ in the following way:
Let $\Phi$, $\Psi$ be unital completely positive maps from $C(A)$ to $\cl{L}^\infty(X,\cl{B}(H))$. We say that $\Phi$ and $\Psi$ are $\mu$-equivalent and write $\Phi\sim_\mu \Psi$ if for every $ f\in C(A)$ we have $$\Phi(f)(x)=\Psi(f)(x), \hspace{0.4cm}\mu\text{-a.e.}$$
Once again using the characterization obtained in Theorem \ref{th_Borel}, we show that the two equivalence relations are compatible.
\begin{proposition}\label{equi}
    Let $E,F\in\frak{C}(A,X;H)$ and $\mu$ a Borel measure on $X$. Then $E\sim_\mu F$ if and only if $\Phi_E\sim_\mu \Phi_F$.
    \end{proposition}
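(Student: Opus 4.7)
The plan is to exploit the second countability of $A$ on both sides to reduce each uncountable family of $\mu$-almost-everywhere identities to a countable subfamily, and then upgrade back to the full family by a Dynkin-class argument on the measure side and a density argument on the UCP-map side. In both directions I expect to produce a \emph{single} $\mu$-null set that works uniformly in $\alpha$ (resp.\ $f$), which is strictly stronger than the definitions demand.

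For the forward implication, I would start from the observation that $\frak{A}_A$ admits a countable generating algebra $\cl{D}$: a countable base of open sets of the second countable space $A$ generates, via finite Boolean combinations, such a $\cl{D}$. The hypothesis $E\sim_\mu F$ yields, for each $\alpha\in\cl{D}$, a $\mu$-null set $N_\alpha$ on whose complement $E(\alpha|x)=F(\alpha|x)$, and the countable union $N=\bigcup_{\alpha\in\cl{D}}N_\alpha$ is still $\mu$-null. Fix $x\notin N$; I would then verify that the class $\cl{F}_x=\{\alpha\in\frak{A}_A:E(\alpha|x)=F(\alpha|x)\}$ is a Dynkin class, using $E_x(A)=F_x(A)=I$ together with the strong-operator countable additivity of $E_x$ and $F_x$. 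Since $\cl{F}_x$ contains the $\pi$-system $\cl{D}$, Dynkin's $\pi$-$\lambda$ theorem forces $\cl{F}_x=\frak{A}_A$, so $E_x=F_x$ for every $x\notin N$. Via the bijection $E\leftrightarrow\phi_E$ recalled at the beginning of Section \ref{secqpm}, this translates to $\phi_{E_x}=\phi_{F_x}$ on $C(A)$, i.e.\ $\Phi_E(f)(x)=\Phi_F(f)(x)$ for every $f\in C(A)$ and every $x\notin N$, giving $\Phi_E\sim_\mu\Phi_F$.

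For the reverse implication I would use that $A$, being compact and second countable, is metrizable, so $C(A)$ is separable. Choosing a countable dense sequence $\{f_n\}\subseteq C(A)$ and $\mu$-null sets $N_n$ with $\Phi_E(f_n)(x)=\Phi_F(f_n)(x)$ for $x\notin N_n$, the union $N=\bigcup_nN_n$ is still $\mu$-null; for $x\notin N$, the bounded UCP maps $\phi_{E_x}$ and $\phi_{F_x}$ agree on $\{f_n\}$ and hence on all of $C(A)$ by continuity. Appealing once more to the correspondence of Section \ref{secqpm}, this forces $E_x=F_x$, and therefore $E(\alpha|x)=F(\alpha|x)$ for every Borel $\alpha$ and every $x\notin N$, i.e.\ $E\sim_\mu F$.

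The main obstacle is concentrated in the forward direction: a naive attempt to union the null sets $N_\alpha$ over all Borel $\alpha$ immediately destroys measurability, so the reduction to a countable generating $\pi$-system plus the Dynkin-class lifting is unavoidable and carries the real content of the argument. Once $E_x=F_x$ is established off a common null set, the passage to the associated UCP maps is a routine consequence of the identification used systematically in Section \ref{secqpm}, and the reverse direction is a standard separability argument.
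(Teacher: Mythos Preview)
Your proof is correct and in fact yields more than the definitions require: a single $\mu$-null set off which $E_x=F_x$ (resp.\ $\phi_{E_x}=\phi_{F_x}$) identically. The paper argues differently and does not produce a uniform null set. For the forward direction it fixes $f\in C(A)$ and $\xi,\eta\in H$ and invokes second countability to pass from the setwise hypothesis to the scalar identity $\int_A f\,dE_{\xi,\eta}(\cdot|x)=\int_A f\,dF_{\xi,\eta}(\cdot|x)$ $\mu$-a.e. For the reverse direction it fixes $\alpha\in\frak{A}_A$, chooses a sequence $(f_n)$ converging pointwise to $\chi_\alpha$, collects the countably many null sets on which $\Phi_E(f_n)\neq\Phi_F(f_n)$, and applies dominated convergence on the complement to obtain $E(\alpha|x)=F(\alpha|x)$ $\mu$-a.e. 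Your $\pi$--$\lambda$/separability route is cleaner and sidesteps the subtlety in the paper's reverse step that an arbitrary Borel indicator need not be a pointwise limit of continuous functions (so the paper is tacitly leaning on the extension of $\phi_E$ to bounded Borel functions mentioned earlier in the text); the paper's approach, in turn, stays closer to the integral formulas and avoids importing the Dynkin machinery.
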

    \begin{proof}
        Let $E\sim_\mu F$, that is for every  $\alpha\in\frak{A}_A$ we have that $E(\alpha|x)=F(\alpha|x)$ $\mu$-almost everywhere. Thus for every $ \xi,\eta\in H$ one has
        $$E_{\xi,\eta}(\alpha|x)=F_{\xi,\eta}(\alpha|x),\hspace{0.2cm}\mu\text{-a.e.},$$
        since $A$ is assumed to be second countable we have that for every $f\in C(A)$ 
        $$\int_Af(a)dE_{\xi,\eta}(a|x)=\int_Af(a)dF_{\xi,\eta}(a|x)\hspace{0.2cm}\mu\text{-a.e.}$$
       which implies that $\Phi_E\sim_\mu\Phi_F$.
       \par Conversely, assume $\Phi_E\sim_\mu\Phi_F$ and let $\alpha\in\frak{A}_A$. Then by approximating $\chi_\alpha$ pointwise with $(f_n)_{n\in\bb{N}}$, we have that for each $n\in\bb{N}$, $\Phi_E(f_n)=\Phi_F(f_n)$ $\mu$-almost eveywhere. Hence there exists a family $\{U_n\}_{n\in\bb{N}}$ of $\mu$-negligible sets satisfying $$\Phi_E(f_n)(x)=\Phi_F(f_n)(x),\hspace{0.2cm}x\in X\setminus U_n.$$
       Consequently, since $f_n$ converges to $\chi_\alpha$ by applying the Dominated Convergence Theorem for each $x\in X\setminus \cup_nU_n$ we have that $\Phi_E(f_n)$ converges to $\Phi_E(\chi_\alpha)$ and $\Phi_F(f_n)$ converges to $\Phi_F(\chi_\alpha)$ $\mu$-almost everywhere. Finally, by $\sigma$-additivity we have that $\mu(\cup_nU_n)=0$ and therefore 
       $$\Phi_E(\chi_\alpha)(x)=\lim_{n\rightarrow\infty}\Phi_E(f_n)(x)=\lim_{n\rightarrow\infty}\Phi_F(f_n)(x)=\Phi_F(\chi_\alpha)(x), \hspace{0.2cm}x\in X\setminus \cup_n U_n.$$ 
    \end{proof}
Next we define $\frak{C}_\mu(A,X;H)$ to be the set of all $\sim_\mu$ equivalent classes of $\frak{C}(A,X;H)$. Elements of $\frak{C}_\mu(A,X;H)$ will be called \emph{$\cl{B}(H)$-valued information channels mod $\mu$}. To avoid confusion we will denote elements of $\frak{C}_\mu(A,X;H)$ by $\dot{E}$ and $E\in\frak{C}(A,X;H)$ for a specific representative of the equivalence class $\dot{E}$.
\par 
In a similar fashion to Theorem \ref{th_Borel} we provide a characterization theorem for operator-valued information channels mod $\mu$. Before proceeding with the result, we gather some required preliminaries regarding measurable functions with values on Banach spaces. 
\par Let $(X,\frak{X},\mu)$ be a $\sigma$-finite measure space and $Y$ be a Banach space. Consider the set of all  $(\frak{X},\frak{A}_Y)$-measurable functions $f:X\rightarrow Y$ and note that since the norm is a continuous function, the function $x\mapsto \|f(x)\|_Y$ is $(\frak{X},\frak{A}_{\bb{R}})$-measurable. For $1\leq p\leq\infty$, let $L^p(X,Y)$ be the space of all $(\frak{X},\frak{A}_Y)$-measurable functions defined up to $\mu$-negligible sets, such that  $$\|f\|_p=\Big(\int_X\|f(x)\|^p_Yd\mu(x)\Big)^{1/p}<\infty.$$
The space $L^p(X,Y)$ equipped with $\|\cdot\|_p$  is called the Bochner $L^p$ space (for more details see \cite{du}). In the case where $Y=Z^*$ for some separable Banach space $Z$, we say that a function $f:X\mapsto Z^*$ is w$^*$-measurable if for all $z\in Z$ the function $x\mapsto f(x)(z)$ is measurable. Notice that since $Z$ is separable, the w$^*$-measurability of $f$ implies the measurability of the function $x\mapsto\|f(x)\|_{Z^*}$. We denote by $L_\sigma^\infty(X,Z^*)$ the set of all w$^*$-measurable functions $f:X\rightarrow Z^*$ such that the quantity $$\|f\|_{\infty}:=\operatorname{ess sup}_{x\in X}\|f(x)\|_{Z^*},$$
is finite. By Dunford-Pettis Theorem \cite{dp}, one has

\begin{align}
    L_\sigma^\infty(X,Z^*)\cong L^1(X,Z)^*\cong (L^1(X)\widehat{\otimes}Z)^*,
\end{align}
where $\hat{\otimes}$ stands for the projective tensor product. For the definition of projective tensor products and the latter equivalence we refer to \cite{ryan}. 

\par In the context of this paper we shall restrict ourselves to the case where $Z=\cl{T}(H)$ and thus $Z^*=\cl{B}(H)$. Note that in this case, the space $L_\sigma^\infty(X,\cl{B}(H))$ is a von Neumann algebra when equipped with pointwise product and involution \cite{sakai}. Moreover, we will denote by $q:\cl{L}^\infty(X,\cl{B}(H))\rightarrow L_\sigma^\infty(X,\mu,\cl{B}(H))$ the natural surjection map and note that it is a *-homomorphism and hence a unital completely positive map.
\begin{theorem}\label{mod_mu}
    For any $\dot{E}\in\frak{C}_\mu(A,X;H)$ there exists a unital completely positive map $\Phi_{\dot{E}}:C(A)\rightarrow L_\sigma^\infty(X,\mu,\cl{B}(H))$ such that for every $ f\in C(A)$, and $ \xi,\eta\in H$ we have 
    $$\langle\Phi_{\dot{E}}(f)(x)\xi,\eta\rangle=\int_Af(a)d\dot{E}_{\xi,\eta}(a|x),\hspace{0.4cm} \mu\text{-a.e.}$$
    Conversely, for any unital completely positive map $\Phi:C(A)\rightarrow L_\sigma^\infty(X,\mu,\cl{B}(H))$  there exists a unique $\dot{E}\in\frak{C}_\mu(A,X,\cl{B}(H))$ such that $\Phi=\Phi_{\dot{E}}$.
\end{theorem}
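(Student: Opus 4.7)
The forward direction reduces directly to Theorem~\ref{th_Borel}. I would pick any representative $E$ of $\dot{E}$, let $\Phi_E : C(A) \to \cl{L}^\infty(X,\cl{B}(H))$ be the UCP map produced by that theorem, and set $\Phi_{\dot{E}} := q \circ \Phi_E$, where $q : \cl{L}^\infty(X,\cl{B}(H)) \to L_\sigma^\infty(X,\mu,\cl{B}(H))$ is the natural quotient (noted in the preamble to be a unital $*$-homomorphism, hence UCP). The integral formula follows $\mu$-a.e.\ by composing the pointwise identity from Theorem~\ref{th_Borel} with the Riesz-type representation recalled in Section~\ref{secqpm}. Independence of the representative is immediate from Proposition~\ref{equi}: $E \sim_\mu F$ forces $\Phi_E \sim_\mu \Phi_F$, so $q \circ \Phi_E = q \circ \Phi_F$.

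For the converse, the strategy is to lift $\Phi$ to a UCP map $\hat{\Phi} : C(A) \to \cl{L}^\infty(X,\cl{B}(H))$ with $q \circ \hat{\Phi} = \Phi$. Once such a lift exists, Theorem~\ref{th_Borel} yields $E \in \frak{C}(A,X;H)$ with $\Phi_E = \hat{\Phi}$, and the class $\dot{E}$ satisfies $\Phi_{\dot{E}} = q \circ \hat{\Phi} = \Phi$. Uniqueness of $\dot{E}$ reruns Proposition~\ref{equi}: two preimages $\dot{E}_1, \dot{E}_2$ would give representatives $E_1, E_2$ with $\Phi_{E_1} \sim_\mu \Phi_{E_2}$, hence $E_1 \sim_\mu E_2$, and thus $\dot{E}_1 = \dot{E}_2$.

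Constructing the lift is where the content lies. Since $A$ is second countable, $C(A)$ is separable; I would fix a countable norm-dense unital self-adjoint $\bb{Q}[i]$-subalgebra $\cl{D} \subseteq C(A)$. For each $f \in \cl{D}$, pick a weakly measurable representative $g_f \in \cl{L}^\infty(X,\cl{B}(H))$ of $\Phi(f)$ (after a harmless truncation one may assume $\|g_f(x)\| \leq \|f\|$ everywhere). The defining relations (a) $\bb{Q}[i]$-linearity $g_{\lambda f + \mu h} = \lambda g_f + \mu g_h$, (b) $g_1(x) = I$, and (c) $g_{f^* f}(x) \geq 0$ each hold $\mu$-a.e., and the union of all these countably many exceptional sets is a single Borel $\mu$-null set $N$. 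For $x \notin N$, the assignment $f \mapsto g_f(x)$ is a unital positive $\bb{Q}[i]$-linear (hence contractive) map on $\cl{D}$, which extends uniquely to a unital positive map $\phi_x : C(A) \to \cl{B}(H)$; by \cite[Theorem~3.11]{Pa} it is automatically CP. Setting $\phi_x$ to an arbitrary fixed UCP map for $x \in N$ and $\hat{\Phi}(f)(x) := \phi_x(f)$, weak measurability of $\hat{\Phi}(f)$ is immediate for $f \in \cl{D}$ (agreement with $g_f$ off the Borel set $N$) and passes to general $f \in C(A)$ by uniform approximation. Finally, $q \circ \hat{\Phi} = \Phi$ holds on $\cl{D}$ by construction and on $C(A)$ by density together with continuity of $q$.

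The principal obstacle is this lifting step: a UCP map into $L_\sigma^\infty(X,\mu,\cl{B}(H))$ consists in principle of uncountably many $\mu$-a.e.\ relations, but separability of $C(A)$ compresses the verification to countably many identities on $\cl{D}$, which coalesce into a single exceptional null set. Positivity combined with commutativity of $C(A)$ then delivers complete positivity for free via Paulsen's theorem, leaving no additional matrix-level conditions to track.
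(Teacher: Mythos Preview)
Your forward direction coincides with the paper's: both take a representative $E$, form $\Phi_E$ via Theorem~\ref{th_Borel}, and compose with the quotient $q$, invoking Proposition~\ref{equi} for well-definedness.

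The converse is where you diverge. The paper does not lift $\Phi$ directly; instead it quotes \cite[Theorem~4.6]{btt}, which furnishes a measurable family $(\pi_x)_{x\in X}$ of $*$-representations of $C(A)$ on a single Hilbert space $K$ and an isometry $V:H\to K$ with $\Phi(f)(x)=V^*\pi_x(f)V$ $\mu$-a.e. Theorem~\ref{th_Borel} applied to $(\pi_x)$ yields a spectral channel $\tilde{E}$, and $E=V^*\tilde{E}V$ is the required representative. Your route is a hands-on lifting: exploit separability of $C(A)$ to pick representatives on a countable dense $\bb{Q}[i]$-$*$-subalgebra $\cl{D}$, collapse the countably many $\mu$-a.e.\ relations to a single null set, and extend pointwise by density and contractivity. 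Your argument is correct and more self-contained, avoiding the external dilation result; it also supplies the uniqueness clause, which the paper's proof does not address explicitly. On the other hand, the paper's approach yields the subsequent Naimark-type corollary essentially for free, since the spectral channel $\tilde{E}$ appears as a byproduct of the dilation, whereas your lift produces only a general $E$ and would need a separate (though routine) step to recover that corollary.
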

\begin{proof}
    Let $\dot{E}\in\frak{C}_\mu(A,X;H)$ and consider a representative $E=(E(\cdot|x))_{x\in X}$ of the equivalence class $\dot{E}$. After applying Theorem \ref{th_Borel} we obtain a $\mu$-measurable family $(\phi_x)_{x\in X}$ of unital completely positive maps $\phi_x:C(A)\rightarrow \cl{B}(H)$, $x\in X$. Following the arguments of Theorem \ref{th_Borel} one obtains a unital completely positive map $\Phi_E:C(A)\rightarrow \cl{L}^\infty(X,\cl{B}(H))$ such that, for every $ f\in C(A)$ and $\xi,\eta\in H$ one has 
     $$\langle\Phi_E(f)(x)\xi,\eta\rangle=\int_Af(a)dE_{\xi,\eta}(a|x), \hspace{0.3cm}x\in X.$$
     Now for $f\in C(A)$, set $\Phi_{\dot{E}}(f)=q(\Phi_E(f))$ . Since $q$ is a unital completely positive map, it follows that $\Phi_{\dot{E}}:C(A)\rightarrow L_\sigma^\infty(X,\mu,\cl{B}(H))$ is unital and completely positive. 
     \par It remains to show that the construction of $\Phi_{\dot{E}}$ does not depend on the representative $E$. Assume that $F$ is another representative of the equivalence class $\dot{E}$, which means $E\sim_\mu F$. Hence by Proposition \ref{equi} one obtains $\Phi_E\sim_\mu\Phi_F$ and therefore for every $f\in C(A)$ we have $\Phi_E(f)=\Phi_F(f)$ $\mu$-\text{a.e.} which is equivalent to $q(\Phi_E(f))=q(\Phi_F(f))$.
     \par Conversely, let $\Phi:C(A)\rightarrow L^\infty_\sigma(X,\mu,\cl{B}(H))$ be a unital completely positive map, then by \cite[Theorem 4.6]{btt} there exists a Hilbert space $K$, a measurable family $(\pi_x)_{x\in X}$ of $^*$-representations of $C(A)$ on $K$ and an isometry $V:H\rightarrow K$ such that for every $f\in C(A)$
     \begin{align}\label{m_dil}
         \Phi(f)(x)=V^*\pi_x(f)V,\hspace{0.2cm}\mu\text{-a.e.}
    \end{align}
    Now by applying Theorem \ref{th_Borel} to the family $(\pi_x)_{x\in X}$ we obtain a spectral-valued information channel $\tilde{E}\in\frak{C}(A,X;H)$ such that $E(\alpha|x)=\pi_x(\chi_\alpha)$ for $x\in X$ and $\alpha\in\frak{A}_A$. By setting $E=V^*\tilde{E}V$ we obtain an operator-valued information channel $E\in\frak{C}(A,X;H)$ such that $$\Phi(f)(x)=\int_Af(a)dE_{\xi,\eta}(a|x),\hspace{0.2cm}\mu\text{-a.e.}$$
\end{proof}
The characterization provided in theorem \ref{mod_mu} combined with \cite[Theorem 4.6]{btt} allows us to formulate the following Naimark's Dilation type result for operator-valued information channels mod $\mu$.
\begin{corollary}
    For any $\dot{E}\in\frak{C}_\mu(A,X;H)$ there exists a separable Hilbert space $K$, an isometry $V:H\rightarrow K$ and a projection-valued $\dot{F}\in\frak{C}_\mu(A,X;K)$ such that for every $\alpha\in\frak{A}_A$

    $$\dot{E}(\alpha|x)=V^*\dot{F}(\alpha|x)V,\hspace{0.3cm}\mu\text{-a.e.}$$
\end{corollary}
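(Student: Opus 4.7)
The plan is to reduce the statement to the measurable Stinespring-type result \cite[Theorem 4.6]{btt} via Theorem \ref{mod_mu}. First, I would apply Theorem \ref{mod_mu} to $\dot{E}$ to produce the associated unital completely positive map $\Phi_{\dot{E}}:C(A)\rightarrow L^\infty_\sigma(X,\mu,\cl{B}(H))$. Then, invoking \cite[Theorem 4.6]{btt} on $\Phi_{\dot{E}}$, I extract a separable Hilbert space $K$, a $\mu$-measurable family $(\pi_x)_{x\in X}$ of $*$-representations of $C(A)$ on $K$, and an operator $V:H\rightarrow K$ such that $\Phi_{\dot{E}}(f)(x)=V^*\pi_x(f)V$ $\mu$-a.e.\ for every $f\in C(A)$. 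Unitality of $\Phi_{\dot{E}}$ forces $V^*V=I$, so $V$ is an isometry.

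Next, I would convert the measurable family of representations into a projection-valued information channel. Using the bijection between $*$-representations of $C(A)$ on $K$ and $\cl{B}(K)$-valued spectral measures on $A$, together with the converse part of Theorem \ref{th_Borel} applied to the measurable family $(\pi_x)_{x\in X}$, I obtain a spectral-valued $\tilde{F}\in\frak{C}(A,X;K)$ with $\tilde{F}(\alpha|x)=\pi_x(\chi_\alpha)$ for every $x\in X$ and $\alpha\in\frak{A}_A$. Let $\dot{F}\in\frak{C}_\mu(A,X;K)$ denote its $\sim_\mu$-equivalence class; it is projection-valued by construction.

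Finally, I would transfer the dilation identity from continuous functions to characteristic functions. Fix $\alpha\in\frak{A}_A$ and, using that $A$ is second countable, choose a uniformly bounded sequence $(f_n)_{n\in\bb{N}}\subseteq C(A)$ converging pointwise to $\chi_\alpha$. For each $n$ there is a $\mu$-null set $U_n$ outside which $\Phi_{\dot{E}}(f_n)(x)=V^*\pi_x(f_n)V$; letting $U=\bigcup_{n}U_n$, which is $\mu$-null by $\sigma$-additivity, and testing against arbitrary $\xi,\eta\in H$, the Dominated Convergence Theorem (applied exactly as in the proof of Proposition \ref{equi}) gives, for $x\in X\setminus U$, the limit $\langle\dot{E}(\alpha|x)\xi,\eta\rangle=\langle V^*\dot{F}(\alpha|x)V\xi,\eta\rangle$. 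Since $\xi,\eta$ are arbitrary, $\dot{E}(\alpha|x)=V^*\dot{F}(\alpha|x)V$ $\mu$-a.e.

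The only real subtlety is the null-set bookkeeping when moving from a $\mu$-a.e.\ identity indexed by continuous functions to one indexed by characteristic functions of arbitrary Borel sets; this is handled by $\sigma$-additivity of $\mu$, exactly as in Proposition \ref{equi}. The bulk of the work is already contained in Theorem \ref{mod_mu} and the cited \cite[Theorem 4.6]{btt}, so the corollary is essentially an extraction of the spectral dilation implicit in their combined proof.
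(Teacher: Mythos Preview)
Your proposal is correct and follows essentially the same route the paper indicates: the corollary is stated without proof, with the text noting only that it follows from Theorem \ref{mod_mu} combined with \cite[Theorem 4.6]{btt}, and your argument is precisely a spelled-out version of the converse direction of the proof of Theorem \ref{mod_mu} (construct $\Phi_{\dot E}$, dilate via \cite[Theorem 4.6]{btt}, pass to the spectral channel via Theorem \ref{th_Borel}). The one extra detail you supply---the $\sigma$-additivity/Dominated Convergence bookkeeping to pass from the $\mu$-a.e.\ identity on $C(A)$ to the $\mu$-a.e.\ identity on characteristic functions---is exactly the mechanism behind the uniqueness clause in Theorem \ref{mod_mu} (equivalently, Proposition \ref{equi}), so nothing new is being introduced.
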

For the remaining of this subsection, we equip $\frak{C}_\mu(A,X;H)$ with a topology induced by the convergence of their corresponding maps in the BW topology defined on $\cl{B}(C(A),L_\sigma^\infty(X,\mu,\cl{B}(H)))$.
\par First notice that the space $L^\infty_\sigma(X,\mu,\cl{B}(H))$ has a predual via the identification $$L^\infty_\sigma(X,\mu,\cl{B}(H))\simeq (L^1(X,\mu,\cl{T}(H)))^*\simeq(L^1(X,\mu)\hat\otimes\cl{T}(H))^*.$$
Therefore we can equip $\cl{B}(C(A),L_\sigma^\infty(X,\mu,\cl{B}(H)))$ with the BW topology and use the identification to induce a topology on $\frak{C}_\mu(A,X;H)$.
\begin{definition}
    Let $(\dot{E}^i)_{i\in I}$ be a net in $\frak{C}_\mu(A,X;H)$. We will say that $(E^i)_{i\in I}$ converges to some $E\in\frak{C}_\mu(A,X;H)$ if $(\Phi_{\dot{E}^i})_{i\in I}$ converges to $\Phi_{\dot{E}}$ in the BW topology.
\end{definition}
Recall, that in the case of bounded nets the BW topology is equivalent with saying that $\Phi_{\dot{E}^i}$ converges to $\Phi_{\dot{E}}$ if and only if $\Phi_{\dot{E}^i}(f)$ converges weakly to $\Phi_{\dot{E}}(f)$, for every $f\in C(A)$. Since maps that correspond to operator-valued information channels mod $\mu$ are unital and completely positive and hence have norm 1, from now on we shall use this latter formulation of the BW topology for bounded nets. 
\par For a concrete description of the convergence between operator-valued information channels mod $\mu$, notice that $\Phi_{\dot{E}^i}\xrightarrow{BW}\Phi_{\dot{E}}$ is equivalent with 
\begin{align}\label{BW}
    \int_X\Phi_{\dot{E}^i}(f)(x)(\omega(x)\otimes\xi\eta^*)d\mu(x)\rightarrow\int_X\Phi_{\dot{E}}(f)(x)(\omega(x)\otimes\xi\eta^*)d\mu(x),
\end{align}
for all $f\in C(A)$, $\omega\in L^1(X,\mu)$ and $\xi,\eta\in H$. Now by setting $f=\chi_\alpha$ and  $\omega=\chi_\beta$ for some $\alpha\in\frak{A}_A$ and $\beta\in\frak{A}_X$,  Eq. \ref{BW} has the following form 
\begin{align}
    \int_\beta\langle\dot{E}^i(\alpha|x)\xi,\eta\rangle d\mu(x)\rightarrow \int_\beta\langle \dot{E}(\alpha|x)\xi,\eta\rangle d\mu(x).
\end{align}
\begin{theorem}
    The space $\frak{C}_\mu(A,X;H)$ equipped with the BW topology is compact. 
\end{theorem}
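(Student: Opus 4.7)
The plan is to transfer the compactness problem from $\frak{C}_\mu(A,X;H)$ to the space of unital completely positive maps $\operatorname{UCP}(C(A), L^\infty_\sigma(X,\mu,\cl{B}(H)))$ via the bijection established in Theorem \ref{mod_mu}. By the very definition of the BW topology on $\frak{C}_\mu(A,X;H)$, this bijection $\dot{E}\mapsto \Phi_{\dot{E}}$ is a homeomorphism onto its image when the codomain is given the BW topology induced by the predual $L^1(X,\mu)\,\hat{\otimes}\,\cl{T}(H)$. Consequently, it suffices to prove that $\operatorname{UCP}(C(A), L^\infty_\sigma(X,\mu,\cl{B}(H)))$ is BW-compact.

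The key observation is that $L^\infty_\sigma(X,\mu,\cl{B}(H))$ is itself a dual space, and hence a von Neumann algebra (as noted in the discussion preceding Theorem \ref{mod_mu}). This puts us precisely in the setting of Paulsen's Theorem 7.4, which was already invoked after Theorem \ref{ps_bw} to conclude BW-compactness of $\operatorname{UCP}(C(A),\cl{B}(H))$. Concretely, the argument has two parts: first, every unital completely positive map is contractive, so $\operatorname{UCP}(C(A), L^\infty_\sigma(X,\mu,\cl{B}(H)))$ is contained in the closed unit ball of $\cl{B}(C(A), L^\infty_\sigma(X,\mu,\cl{B}(H)))$; second, by Alaoglu's theorem applied pointwise together with Tychonoff's theorem, this unit ball is compact in the BW topology (the point--weak* topology induced by $L^1(X,\mu)\,\hat{\otimes}\,\cl{T}(H)$).

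It then remains to verify that $\operatorname{UCP}(C(A), L^\infty_\sigma(X,\mu,\cl{B}(H)))$ is BW-closed inside this compact unit ball. If $(\Phi^i)_{i\in I}$ is a net of UCP maps converging in the BW topology to some $\Phi$, then for each positive $f\in C(A)$ and each positive $\omega\in L^1(X,\mu)\,\hat{\otimes}\,\cl{T}(H)$, the pairing $\langle \Phi(f),\omega\rangle$ is the limit of the positive numbers $\langle \Phi^i(f),\omega\rangle$, hence positive; unitality follows from $\langle \Phi(1),\omega\rangle=\lim \langle \Phi^i(1),\omega\rangle=\langle 1,\omega\rangle$. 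Complete positivity is obtained similarly by considering the amplifications $\Phi^{(n)}$ and noting that the BW topology is compatible with matrix amplifications.

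The main subtlety I anticipate is making sure one is using the correct predual and checking that the various natural identifications $L^\infty_\sigma(X,\mu,\cl{B}(H))\cong (L^1(X,\mu)\,\hat{\otimes}\,\cl{T}(H))^*$ translate the pointwise BW convergence as formulated in Eq.~(\ref{BW}) into genuine weak* convergence in the dual pairing; once this compatibility is secured, the compactness proof reduces to the standard Alaoglu--Tychonoff argument combined with the closedness of the positive and unital cones in the BW topology.
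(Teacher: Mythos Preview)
Your proposal is correct and follows essentially the same route as the paper: reduce via Theorem~\ref{mod_mu} to showing $\operatorname{UCP}(C(A),L^\infty_\sigma(X,\mu,\cl{B}(H)))$ is BW-compact, observe that the unit ball of $\cl{B}(C(A),L^\infty_\sigma(X,\mu,\cl{B}(H)))$ is BW-compact, and then check BW-closedness of the UCP subset. The only minor difference is that the paper exploits commutativity of $C(A)$ (so positivity already implies complete positivity) and phrases the positivity argument as weak*-closedness of the positive unit ball in the von Neumann algebra $L^\infty_\sigma(X,\mu,\cl{B}(H))$, whereas you verify positivity via the dual pairing and then handle complete positivity by amplification; both arguments are standard and interchangeable here.
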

\begin{proof}
    By the definition of the BW topology the statement is equivalent to showing that the topological space $\Delta=\operatorname{UCP}(C(A),L^\infty_\sigma(X,\mu,\cl{B}(H))$ is BW compact. First note that the closed unit ball of $\Gamma=\cl{B}(C(A),L_\sigma^\infty(X,\mu,\cl{B}(H))$ is BW-compact and hence it suffices to show that $\Delta$ is closed. Consider a net $(\Phi_\lambda)_{\lambda\in \Lambda}\subseteq\Delta$ that converges to some $\Phi\in\Gamma$. Since $\Phi_\lambda$ is unital and the closed unit ball of $\Gamma$ is BW-compact we have that $\|\Phi\|_{cb}\leq1$. Now since the domain of $\Phi$ is commutative it suffices to show that $\Phi$ is positive. Let $f\in C(A)^+$ with $\|f\|\leq1$, then $(\Phi_\lambda(f))_{\lambda\in \Lambda}\subseteq(L^\infty(X,\mu,\cl{B}(H))^+)_1$ is converging to $\Phi(f)$ in the $w^*$ topology. Since $L^\infty_\sigma(X,\mu,\cl{B}(H))$ is a von Neumann algebra it follows that $(L^\infty_\sigma (X,\mu,\cl{B}(H))^+)_1$ is $w^*$-closed and hence $\Phi(f)$ is positive.
\end{proof}
\subsection*{Acknowledgements}  The author would like to thank Ivan Todorov and Lyudmila Turowska for their valuable suggestions and comments. The author was supported by NSF grants CCF-2115071 and DMS-2154459. This work was supported by the University of Delaware through the UNIDEL fellowship and the Department of Mathematical Sciences through the Summer fellowship.

\end{document}